\documentclass[12pt]{article}

\usepackage{algorithm,algorithmicx,algpseudocode}

\usepackage{graphicx}
\usepackage{changebar} 
\usepackage{multirow}
\usepackage{multicol}
\usepackage{latexsym,amsmath,amsfonts,amscd, amsthm,esint}
\usepackage{changebar}
\usepackage{subfigure}
\usepackage{color}
\usepackage{tikz}
\usetikzlibrary{arrows,backgrounds,snakes,shapes}

\topmargin-.5in \textheight9in \oddsidemargin0in \textwidth6.5in

\newtheoremstyle{plainNoItalics}{}{}{\normalfont}{}{\bfseries}{.}{ }{}

\theoremstyle{plain}
\newtheorem{thm}{Theorem}[section]
\theoremstyle{plainNoItalics}

\newtheorem{defn}[thm]{Definition} 
\newtheorem{rem}[thm]{Remark}
\newtheorem{prop}[thm]{Proposition}
\newtheorem{exa}[thm]{Example}

\newcommand{\bx}{{\bf x}}
\newcommand{\f}{\frac}

\newcommand{\beq}{\begin{equation}}
\newcommand{\eeq}{\end{equation}}
\newcommand{\beqa}{\begin{eqnarray}}
\newcommand{\eeqa}{\end{eqnarray}}
\newcommand{\bit}{\begin{itemize}}
\newcommand{\eit}{\end{itemize}}
\newcommand{\bedef}{\begin{defn}}
\newcommand{\edefn}{\end{defn}}
\newcommand{\bpro}{\begin{prop}}
\newcommand{\epro}{\end{prop}}
\newcommand{\Dx}{\Delta x}
\newcommand{\Dy}{\Delta y}
\newcommand{\Dt}{\Delta t}

\newcommand{\xL}{{x_{j-\frac{1}{2}}}}
\newcommand{\xR}{{x_{j+\frac{1}{2}}}}

\newcommand{\jL}{{j-\frac{1}{2}}}
\newcommand{\jR}{{j+\frac{1}{2}}}

\tikzstyle{myarrow}=[->, >=open triangle 90, thick]
\tikzstyle{compoarrow}=[->,>=diamond,thick ]
\tikzstyle{inhearrow}=[->,>=diamond,thick ]
\tikzstyle{line}=[-, thick]
\tikzstyle{arrowline} = [draw, -latex']
\tikzstyle{arrowline1} = [draw, latex'-]
\tikzstyle{decision} = [diamond, aspect = 3, text centered, draw=black]
\tikzstyle{abstract}=[rectangle, draw=black,text centered, anchor=north, minimum width = 3cm]
\tikzstyle{abstract1}=[rectangle, draw=black,anchor=north, text width=4cm]
\tikzstyle{comment}=[rectangle, draw=black, rounded corners, fill=green, drop shadow, text centered, anchor=north, text=white, text width=3cm]
\tikzstyle{myarrow}=[->, >=open triangle 90, thick]
\tikzstyle{compoarrow}=[->,>=diamond,thick ]
\tikzstyle{inhearrow}=[->,>=diamond,thick ]
\tikzstyle{line}=[-, thick]
\tikzstyle{arrow} = [->,>=stealth]


\begin{document}

\baselineskip=1.8pc


\begin{center}
{\bf
A semi-Lagrangian discontinuous Galerkin (DG) - local DG method for solving convection-diffusion equations
}
\end{center}

\vspace{.03in}

\vspace{.2in}
\centerline{
Mingchang Ding\footnote{
 Department of Mathematical Sciences, University of Delaware, Newark, DE, 19716. E-mail: dmcvamos@udel.edu.
},
Xiaofeng Cai\footnote{
 Department of Mathematical Sciences, University of Delaware, Newark, DE, 19716. E-mail: xfcai@udel.edu.
},
 Wei Guo\footnote{
Department of Mathematics and Statistics, Texas Tech University, Lubbock, TX, 70409. E-mail:
weimath.guo@ttu.edu. Research
is supported by NSF grant NSF-DMS-1830838 (Program manager: Dr.~Leland M. Jameson).
},
Jing-Mei Qiu\footnote{Department of Mathematical Sciences, University of Delaware, Newark, DE, 19716. E-mail: jingqiu@udel.edu. Research of the first, second and last author is supported by NSF grant NSF-DMS-1522777 and NSF-DMS-1818924 (Program manager: Dr.~Leland M. Jameson), Air Force Office of Scientific Research FA9550-18-1-0257 (Program manager: Dr.~Fariba Fahroo).}
}
\vspace{.05in}

\centerline{\bf Abstract}

In this paper, we propose an efficient high order semi-Lagrangian (SL) discontinuous Galerkin (DG) method for solving linear convection-diffusion equations. The method generalizes our previous work on developing the SLDG method for transport equations  \cite{cai2017high}, making it capable of handling additional diffusion and source terms. Within the DG framework, the solution is evolved along the characteristics; while the diffusion term is discretized by the local DG (LDG) method and  integrated along characteristics by implicit Runge-Kutta methods together with source terms. The proposed method is named the `SLDG-LDG' method and enjoys many attractive features of the DG and SL methods. These include the uniformly high order accuracy (e.g. third order) in space and in time, compact, mass conservative, and stability under large time stepping size.  An $L^2$ stability analysis is provided when the method is coupled with the first order backward Euler discretization. Effectiveness of the method are demonstrated by a group of numerical tests in one and two dimensions.

\vspace{0.2in}

\noindent {\bf Keywords:}
Convection-diffusion equation; semi-Lagrangian; discontinous Galerkin (DG) method; local DG method;  Implicit Runge-Kutta method; stability analysis.

\newpage

\section{Introduction}
\label{sec1}
\setcounter{equation}{0}

In this paper, we are concerned with
 solving the time dependent convection-diffusion problems in the form of
\beq \label{eq3:1}
\begin{cases}
	u_t +\nabla_{\bx} \cdot ({\bf a}(\bx,t) u) 			= 	\epsilon \Delta u + g,	\quad \bx \in \Omega,\quad t>0, \\
	u(\bx,0) 									= 	u_0(\bx),\quad \bx \in \Omega
\end{cases}
\eeq
with $\epsilon \geq 0$. For the scope of our current research, we assume the velocity field ${\bf a}(\bx, t)$ to be continuous with respect to $\bx$ and $t$.

A popular computational method for finding approximate solutions to transport dominant problems in the form \eqref{eq3:1} is the semi-Lagrangian (SL) method, which has a long history in computational fluid dynamics, e.g. for convection-diffusion problems \cite{xiu2001semi, russell2002overview}, climate modeling \cite{lin1996multidimensional, staniforth1991semi, diamantakis2013semi}, plasma simulations \cite{sonnendrucker1999semi}, as well as linear and Hamilton-Jacobi equations \cite{falcone2013semi}. For transport dominant problems, the method is designed via tracking the characteristics forward or backward in time, thus avoiding the time step restriction,  and can be coupled with various  spatial discretization, such as the finite element method \cite{russell2002overview}, the finite difference method with polynomial and spline interpolations \cite{sonnendrucker1999semi}, the spectral element method \cite{giraldo2003spectral}, the discontinuous Galerkin (DG) method \cite{cai2017high}. In the presence of diffusion and source terms, usually time integration should be performed along characteristics, e.g. see \cite{groppi2016high} for the BGK model, and \cite{celia1990eulerian, bonaventura2016flux, xiu2001semi} for linear and nonlinear convection-diffusion models. 

The objective of this paper is to develop an efficient high order SL method for \eqref{eq3:1} under the DG framework. The DG discretization approach is a class of finite element methods that use piecewise continuous approximations and enjoy many attractive computational advantages for transport dominant problems. 
In this paper, we propose to evolve the convection term by the SLDG method recently proposed in \cite{cai2017high}, and treat the diffusion term by the local DG (LDG) method coupled with a diagonally implicit (DI) Runge-Kutta (RK) method along dynamic characteristics elements. 
The proposed method is termed as the SLDG-LDG method. In the scheme formulation, we introduce the adjoint problem for the test function in the same spirit of ELLAM \cite{russell2002overview}, and project the DG solution and LDG approximation to second derivative terms onto a set of time-dependent characteristics elements, based on the procedure developed in our earlier work \cite{cai2017high}. There are a few key novelties of this work, compared with existing methods in the literature. First, thanks to the DG  framework together with the backward characteristics tracing mechanism, our proposed scheme is inherently locally mass conservative. In particular, when compared with the SL finite difference framework in which a high order interpolation is employed, the DG finite element and the finite volume schemes are known to be a more natural framework for mass conservation; the authors in \cite{bosler2019conservative} propose an SLDG method with forward characteristics tracing. In their work the global mass conservation is enforced by an extra step of constrained optimization, i.e., a mass fixer, and hence the local mass conservation as well as the original order accuracy are not guaranteed. We note that the SLDG work \cite{cai2017high} is an extension of the CSLAM \cite{lauritzen2010conservative} from the finite volume setting to the DG setting by introducing an adjoint problem for the test function; and it shares the same local mass conservation property with the CSLAM. 
Second, we inherit advantages of the DG in the SL framework. These include the schemes' ability to resolve solution structures and to evaluate the diffusion term by the LDG method. In the LDG method, by introducing auxiliary variables the second spatial derivative is rewritten into a system of first order equations, and proper choices of fluxes are made for numerical stability and accuracy. Third, unlike the method-of-lines approach in an Eulerian framework, the time integration has to be performed for the material derivative which is not necessarily aligned with the background grid. Hence, extra effort has to be made. Second order Crank-Nicolson and BDF methods have been proposed and used in the SL setting \cite{xiu2001semi}; yet there is little existing work that employs higher than second order multi-stage RK method for the integration of non-convection terms. In this work, on each RK stage, we propose to update the solution and the diffusion term on the background elements; and then project them to the characteristics elements by the same SLDG algorithm in a purely convective setting \cite{cai2017high}. The RK implementation can be done in a stage-by-stage manner as that of first order backward Euler method. As we use the SL method for transport and implicit RK method along characteristics for other terms, our scheme is highly accurate and unconditionally stable for linear problems. Last, our scheme formulation does not employ operator splitting and thus is free of splitting error.

Another class of very popular solvers for \eqref{eq3:1} is the Eulerian method, among which the most relevant high order methods related to this work is the Eulerian DG method. 
Typically, an implicit-explicit (IMEX) RK time discretization is used for time discretization of \eqref{eq3:1}, i.e. the convection term is handled by explicit  RK methods, while the diffusion term is discretized by an LDG \cite{cockburn1998local} method in space along with an implicit RK method in time. From the stability analysis via the energy method in \cite{wang2015stability}, there is a very strong result stating that ``such IMEX LDG schemes are unconditionally stable for the linear problems in the sense that the time-step size is only required to be upper-bounded by a constant which depends on the ratio of the diffusion and the square of the advection coefficients and is independent of the spatial mesh-size h, even though the advection term is treated explicitly." We remark that, under the same setting, our scheme is unconditionally stable with no time step constraint for stability, when a first order backward Euler method is used. Extension of the theoretical analysis, when a higher order DIRK method is used for diffusion term, is subject to future work.

The rest of the paper is organized as follows. In Section 2, we introduce the proposed methodology for one-dimensional (1D) and two-dimensional (2D) problems; theoretically we prove the mass conservation and $L^2$ stability when the method is coupled with the first-order backward Euler method. In Section 3, we present numerical results to demonstrate the effectiveness of our proposed approach with high order accuracy, and stability under large time stepping sizes. Finally, a conclusion is given in Section 4.

\section{The SLDG-LDG method for convection-diffusion problems}
\label{sec2}

\setcounter{equation}{0}

In this paper, we focus on problems in one and two dimensions on rectangular domains with zero or periodic boundary conditions. Notice that our problem \eqref{eq3:1} is in the conservative form, for which local mass conservation is desired at the discrete level for the numerical scheme. 
Below, we formulate the proposed scheme for 1D problems in Section~\ref{1D_form} by first introducing the spatial discretization and the adjoint problem for the test function; then we introduce the proposed treatment of the diffusion and source terms with DIRK methods along characteristic elements. The extensions to 2D problems are then discussed briefly in Section~\ref{2D_form}.

\subsection{Scheme formulation: 1D case}
\label{1D_form}

To introduce the algorithm, we start from the 1D case of \eqref{eq3:1}:
\beq \label{eq3:1:1d}
	u_t +(a(x,t) u)_x 	=	 \epsilon u_{xx} + g.
\eeq

\noindent
{\bf I. Spatial discretization: DG solution and test function spaces.}
We discretize the 1D domain $[x_a, x_b]$ into $N$ elements:
$x_a 	= x_{\frac{1}{2}} 	<x_{\frac{3}{2}} <	 \cdots < 	x_{N+\frac{1}{2}} = 	x_b,$
with $I_j = [\xL,\xR]$ denoting an element of length $\bigtriangleup x_{j}=\xR-\xL$ for $j=1, 2, \cdots, N$. $\Dt = t^{n+1}-t^n$ represents the time discretization step. In the framework of the DG method, we let numerical solutions and test functions belong to the finite dimensional piecewise approximation space
\beq \label{eq2:3}
	V^k_h =  \{v_h:v_h|_{I_j} 	\in P^k(I_j),\, j =1, 2, \cdots, N \},
\eeq
where $P^k(I_j)$ denotes the set of polynomials of degree at most $k$ over $I_j$.

\noindent
{\bf II. Adjoint problem.}
To formulate the SLDG-LDG scheme, we follow a similar idea in \cite{guo2014conservative, cai2017high} by considering the following adjoint problem for the test function $\psi(x, t)$ that
satisfies
\beq \label{eq3:3}
	\psi_t+ a(x,t) \psi_x	=	0, \quad t \in [\tau_1, \tau_2]
\eeq
with
\beq
\label{eq3:3a}
\psi(x, \tau_2) = \Psi(x) \in V^k_h.
\eeq
In other words, $\psi$ satisfies a final-value problem with function values specified at $\tau_2$. For a pure convection problem \cite{cai2017high}, we have $[\tau_1, \tau_2] = [t^n, t^{n+1}]$; while $\tau_1$ and $\tau_2$ could also correspond to different time stages in an implicit RK method when discretizing the diffusion and source terms along characteristics.
Next, we make the following observations for the test function $\psi(x,t)$:
\bit
\item[(i)] While the original problem \eqref{eq3:1} is in the conservative form, an adjoint problem for the test function is in the
advective form \eqref{eq3:3}. Along characteristics curves governed by
\beq
\label{eq: 3:4}
\frac{d \widetilde{x}(t)}{dt} = a(\widetilde{x}(t), t),
\eeq
$\psi(\widetilde{x}(t),t)$ stays constant. Hence $\psi(x,t)$, $\forall x\in[x_a, x_b]$, $t \in [\tau_1, \tau_2)$ can be obtained by tracking characteristics based on \eqref{eq: 3:4}. 
\item[(ii)] The test function satisfies a final value problem \eqref{eq3:3a}. In general, $\psi(x,t)$ with $t \in [\tau_1, \tau_2)$, is not necessarily a polynomial. Yet, it can be approximated by polynomials with high order accuracy as presented in the algorithm flowchart Step 1.1 below.
\eit

\noindent
{\bf III. Time dependent characteristics interval, see Figure~\ref{schematic_1d(a)}.}
Let
$$\widetilde{I}_j^{n+1,n}(t) =[\widetilde{x}_{\jL}(t), \widetilde{x}_{\jR}(t)], \quad t\in [t^n, t^{n+1}],$$
 be the dynamic interval bounded by characteristics curves $\widetilde{x}_{\jL}(t)$ and $\widetilde{x}_{\jR}(t)$ emanating from cell boundaries of $I_j$ at $t^{n+1}$, where $\widetilde{x}_{j \pm \f12}(t)$ satisfy the final value problems
\beq
\label{eq: char_inter}
	\frac{d \widetilde{x}_{j \pm \f12} (t)}{dt} = a(\widetilde{x}_{j \pm \f12} (t), t), \qquad
	\widetilde{x}_{j \pm \f12} (t^{n+1}) = x_{j\pm\f12}.
\eeq
Let $[\widetilde{x}_{\jL}(t^n), \widetilde{x}_{\jR}(t^n)] = [x_{j-\frac12}^{n+1,n},x_{j+\frac12}^{n+1,n}]$, which will be referred to as the ``upstream cell" at $t^n$ later.
Here the superscripts {\small{$n+1, n$}} refer to the interval from $t^{n+1}$, being tracked backward in time to $t^n$.
See Figure~\ref{schematic_1d(a)} for illustration of $\widetilde{x}_{\jL}(t)$, $\widetilde{x}_{\jR}(t)$ and $[x_{j-\frac12}^{n+1,n},x_{j+\frac12}^{n+1,n}]$.

\begin{figure}[h!]
\centering
\subfigure[]{
\label{schematic_1d(a)}
\begin{tikzpicture}[scale=1.0]
    \draw[white,fill=blue!3] (0,3) to[out=210,in=80] (0-2.,0)  -- (2.5-2,0) to[out=80,in=210] (2.5 ,3)
      -- cycle;
    \draw[black]                 (-3,0) node[left] { } -- (3,0)
                                        node[right]{\scriptsize$t^{n}$};
    \draw[black]                 (-3,3) node[left] { } -- (3,3)
                                        node[right]{\scriptsize$t^{n+1}$};
    \draw[thick] (0, 3-0.1) -- (0, 3+0.1) node[above] {\scriptsize$x_{j-\frac12}$};
    \draw[thick] (2.5, 3-0.1) -- (2.5, 3+0.1) node[above] {\scriptsize$x_{j+\frac12}$};
    \draw[thick] (-2.5, 3-0.1) -- (-2.5, 3+0.1) node[above] {};
    \draw[thick] (0, 0-0.1) -- (0, 0+0.1) node[above] { };
    \draw[thick] (2.5, 0-0.1) -- (2.5, 0+0.1) node[above] { };
    \draw[thick] (-2.5, 0-0.1) -- (-2.5, 0+0.1) node[above] { };
 \draw[-latex,dashed,blue]( 0, 3 )node[left,scale=1.3]{$$}
        to[out=210,in=80] ( 0-2,0) node[below=2pt] { };
 \draw[-latex,dashed,blue]( 2.5, 3 )node[left,scale=1.3]{$$}
        to[out=210,in=80] ( 2.5-2,0) node[below=2pt] { };
 \fill [blue] ( 0, 3 ) circle (1.6pt) node[right] {};
 \fill [blue] ( 2.5, 3 ) circle (1.6pt) node[right] {};

 \fill [red] ( 0-2, 0 ) circle (1.6pt) node[above left=-0.1 ] {\scriptsize$x_{j-\frac12}^{n+1,n}$};
 \fill [red] ( 2.5-2, 0 ) circle (1.6pt) node[above right=-0.1] {\scriptsize$x_{j+\frac12}^{n+1,n}$};
\draw [decorate,color=red,decoration={brace,mirror,amplitude=3pt},xshift=0pt,yshift=0pt]
(-2. ,0) -- (0,0) node [red,midway,xshift=0cm,yshift=-14pt]
{\scriptsize $I_{j,1}^{n+1,n}$};
\draw [decorate,color=red,decoration={brace,mirror,amplitude=3pt},xshift=0pt,yshift=0pt]
( 0 ,0) -- (0.5,0) node [red,midway,xshift=0cm,yshift=-14pt]
{\scriptsize$I_{j,2}^{n+1,n}$};
\node[blue!70, rotate=0] (a) at ( 0.2 ,2.2) { \large $ \mathcal{K}$ };
\draw[-latex,blue!80]( -1.2,1)node[right=-2pt,scale=1.]{\scriptsize $\widetilde{I}_j^{n+1,n}(t)$}
        to[out=180,in=0] (-1.75,1) node[above left=2pt] {$$};  

\draw[-latex,blue!80](0.2,1)node[right,scale=1.]{ }
        to[out=0,in=180] (0.75,1) node[above left=2pt] {$$};

\end{tikzpicture}
}
\subfigure[]{
\label{schematic_1d(b)}
\begin{tikzpicture}[scale=1.0]
    \draw[white,fill=blue!3] (0,3) to[out=210,in=80] (0-2.,0)  -- (2.5-2,0) to[out=80,in=210] (2.5 ,3)
      -- cycle;
    \draw[black]                 (-3,0) node[left] { } -- (3,0)
                                        node[right]{\scriptsize$t^{n}$};
    \draw[black]                 (-3,3) node[left] { } -- (3,3)
                                        node[right]{\scriptsize$t^{n+1}$};
    \draw[thick] (0, 3-0.1) -- (0, 3+0.1) node[above] {\scriptsize$x_{j-\frac12}$};
    \draw[thick] (2.5, 3-0.1) -- (2.5, 3+0.1) node[above] {\scriptsize$x_{j+\frac12}$};
    \draw[thick] (-2.5, 3-0.1) -- (-2.5, 3+0.1) node[above] {};
    \draw[thick] (0, 0-0.1) -- (0, 0+0.1) node[above] { };
    \draw[thick] (2.5, 0-0.1) -- (2.5, 0+0.1) node[above] { };
    \draw[thick] (-2.5, 0-0.1) -- (-2.5, 0+0.1) node[above] { };
 \draw[-latex,dashed,blue]( 0, 3 )node[left,scale=1.3]{$$}
        to[out=210,in=80] ( 0-2,0) node[below=2pt] { };
 \draw[-latex,dashed,blue]( 2.5, 3 )node[left,scale=1.3]{$$}
        to[out=210,in=80] ( 2.5-2,0) node[below=2pt] { };
 \fill [blue] ( 0, 3 ) circle (1.6pt) node[right] {};
 \fill [blue] ( 2.5, 3 ) circle (1.6pt) node[right] {};

 \fill [red] ( 0-2, 0 ) circle (1.6pt) node[above left=-0.1 ] {\scriptsize$x_{j-\frac12}^{n+1,n}$};
 \fill [red] ( 2.5-2, 0 ) circle (1.6pt) node[above right=-0.1] {\scriptsize$x_{j+\frac12}^{n+1,n}$};
 \draw[-latex ,blue]( 0+0.6, 3 )node[left,scale=1.3]{$$}
        to[out=210,in=80] ( 0-2+0.6,0) node[below=2pt] { };
 \draw[-latex, blue]( 2.5-0.6, 3 )node[left,scale=1.3]{$$}
        to[out=210,in=80] ( 2.5-2-0.6,0) node[below=2pt] { };
 \fill [blue] ( 0+0.6, 3 ) circle (1.6pt) node[right] {};
 \fill [blue] ( 2.5-0.6, 3 ) circle (1.6pt) node[right] {};

 \fill [red] ( 0-2+0.6, 0 ) circle (1.6pt) node[above left=-0.1 ] { };
 \fill [red] ( 2.5-2-0.6, 0 ) circle (1.6pt) node[above right=-0.1] { };
 \draw[-latex ]( 0+1, 3.5 )node[right=-3pt,scale=1.0]{\scriptsize$x_{j,i_{gl}}$}
        to[out=210,in=80] ( 0+0.6, 3) node[below=2pt] { };
 \draw[-latex,red]( -1.5 , -0.5 )node[right=-4pt,scale=1.0]{\scriptsize$(x_{j,i_{gl}}^{n+1,n}, \Psi(x_{j,i_{gl} })) \rightarrow \Psi_\star^{n+1,n}(x) \text{ interpolate} $ }
        to[out=140,in=220] ( 0+0.6-2, 0) node[below=2pt] { };

\end{tikzpicture}
}

\caption{Schematic illustration of the SLDG-LDG formulation in 1D. Left: Integration region $\mathcal{K}$, dynamic interval $\widetilde{I}_j^{n+1,n}(t)$ and upstream interval $I^{n+1,n}_j = I^{n+1,n}_{j,1} \cup I^{n+1,n}_{j,2}$. Right: Interpolation of $\psi^{n+1,n}$.}

\label{schematic_1d}
\end{figure}
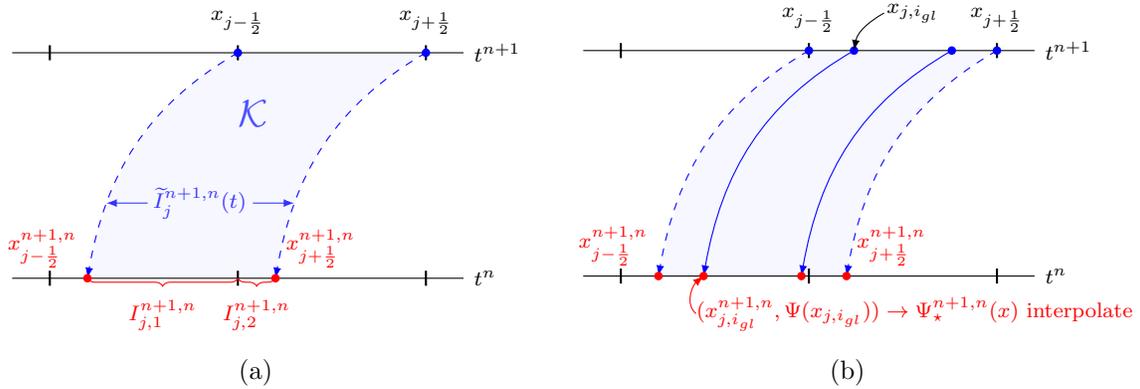

\noindent
{\bf IV. SLDG-LDG scheme formulation and discretization.}
Let $u^{n}$ be the numerical solution at time $t^{n}$, the SLDG-LDG scheme is formulated based on Proposition \ref{prop_1D} presented below.

\begin{prop} \label{prop_1D}

Consider the 1D problem \eqref{eq3:1:1d} and the adjoint problem \eqref{eq3:3} for the test function $\psi$,
then the following identity holds
\beq \label{eq3:3:2}
\int_{I_j}u^{n+1}\Psi \ dx	-	\underbrace{\int_{I^{n+1,n}_j } u^n\psi^{n+1, n} \ dx}_{\text{Term I}}	=	\underbrace{\int^{t^{n+1}}_{t^n}\int_{\widetilde{I}^{n+1,n}_j(t)} [\epsilon u_{xx}+g] \psi \ dx\ dt}_{\text{Term II}}, \quad \Psi \in P^k(I_j)
\eeq
where $\psi^{n+1, n}$ is the solution to the adjoint problem \eqref{eq3:3} with $\psi(x, t^{n+1}) = \Psi$ at $t^n$.

\end{prop}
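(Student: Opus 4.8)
The plan is to derive the identity \eqref{eq3:3:2} directly from the PDE \eqref{eq3:1:1d} by integrating it against the test function $\psi$ over the space-time region $\mathcal{K}=\{(x,t): t\in[t^n,t^{n+1}],\ x\in \widetilde{I}^{n+1,n}_j(t)\}$ swept out by the characteristic interval (the shaded region $\mathcal{K}$ in Figure~\ref{schematic_1d(a)}), and then invoking the divergence theorem in the $(x,t)$-plane. First I would multiply \eqref{eq3:1:1d} by $\psi$ and integrate over $\mathcal{K}$, so that the right-hand side is exactly Term II. The entire content of the proposition is then to show that integrating the left-hand side, $\int_{t^n}^{t^{n+1}}\int_{\widetilde{I}^{n+1,n}_j(t)}[u_t+(au)_x]\psi\,dx\,dt$, reproduces precisely $\int_{I_j}u^{n+1}\Psi\,dx$ minus Term I.

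The key algebraic step is the product-rule rewriting
\beq
[u_t+(au)_x]\,\psi = \partial_t(u\psi)+\partial_x(au\psi) - u\,(\psi_t+a\psi_x).
\eeq
Here the crucial observation is that the last term vanishes identically, since $\psi$ solves the adjoint equation \eqref{eq3:3}, namely $\psi_t+a\psi_x=0$. What remains, $\partial_t(u\psi)+\partial_x(au\psi)$, is precisely the space-time divergence of the vector field $(au\psi,\,u\psi)$, so I would apply Green's theorem to convert the double integral over $\mathcal{K}$ into a contour integral over $\partial\mathcal{K}$, which takes the form $\oint_{\partial\mathcal{K}}(au\psi\,dt - u\psi\,dx)$.

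The boundary $\partial\mathcal{K}$ has four pieces: the top segment $\{t=t^{n+1},\ x\in I_j\}$, the bottom segment $\{t=t^n,\ x\in I^{n+1,n}_j\}$, and the two lateral curves $x=\widetilde{x}_{j\mp\frac12}(t)$. On the top, using $\psi(x,t^{n+1})=\Psi$, the contribution evaluates to $\int_{I_j}u^{n+1}\Psi\,dx$, and on the bottom it gives $-$Term I. The step I expect to require the most care is verifying that the two lateral contributions vanish. The point is that along each characteristic curve one has $dx=\widetilde{x}'_{j\pm\frac12}(t)\,dt=a\,dt$ by \eqref{eq: char_inter}, so the oriented boundary element contributes $au\psi\,dt - u\psi\,dx = au\psi\,dt - u\psi\cdot a\,dt = 0$. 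This exact cancellation is what singles out the characteristic interval as the correct integration domain, and it is the structural reason the adjoint/ELLAM formulation is natural here.

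Finally I would assemble the pieces: the divergence theorem yields $\int_{t^n}^{t^{n+1}}\int_{\widetilde{I}^{n+1,n}_j(t)}[u_t+(au)_x]\psi\,dx\,dt = \int_{I_j}u^{n+1}\Psi\,dx - \text{Term I}$, and equating this with Term II gives \eqref{eq3:3:2}. Throughout I would treat $u$ as the (sufficiently smooth) solution of \eqref{eq3:1:1d}, and use the continuity of the velocity field $a$ so that the characteristics \eqref{eq: char_inter} are well defined and the two curves bounding $\mathcal{K}$ do not cross on $[t^n,t^{n+1}]$; the resulting relation is an exact identity that the SLDG-LDG scheme subsequently discretizes.
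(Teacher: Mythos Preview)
Your proposal is correct and follows essentially the same approach as the paper: both multiply the PDE by $\psi$, use the adjoint equation to reduce the integrand to the space-time divergence $\partial_t(u\psi)+\partial_x(au\psi)$, and then apply the divergence/Green theorem over $\mathcal{K}$ with cancellation on the lateral characteristic boundaries. Your write-up is in fact more explicit than the paper's, spelling out the $dx=a\,dt$ reason for the lateral cancellation that the paper only states in one phrase.
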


\begin{proof}

$\int_{ \mathcal{K} } \eqref{eq3:1:1d} \cdot \psi		+	\eqref{eq3:3} \cdot u \ dx\ dt$ leads to
\beq \label{eq3:3:3}
	\int_{\mathcal{K}} \left([ u_t+(a(x,t)u)_x ] \psi+[ \psi_t+a(x,t)\psi_x]u\right) \ dx\ dt = \int_{\mathcal{K} } [\epsilon u_{xx}+g] \psi \ dx\ dt,
\eeq
where the domain $\mathcal{K} $ (see Figure $\ref{schematic_1d(a)}$) is bounded by $I_j$, $I^{n+1,n}_j $ from above and below, and characteristic trajectories $\widetilde{x}_{\jL}(t)$ and $\widetilde{x}_{\jR}(t)$ from left and right, respectively. Rearranging terms in eq. \eqref{eq3:3:3} gives
\beq \label{eq3:3:1}
	\int_{\mathcal{K}}[u \psi]_t+[a(x,t)u \psi]_x \ dx\ dt = \int_{\mathcal{K} } [\epsilon u_{xx}+g] \psi \ dx\ dt.
\eeq
Applying the divergence theorem to the left-hand side (LHS) of eq. \eqref{eq3:3:1} and due to the cancellation of the integrals along the characteristic curves, we prove \eqref{eq3:3:2}.

\end{proof}
To update $u^{n+1} \in V^k_h$, one has to evaluate Term I and Term II in eq. \eqref{eq3:3:2} by letting the test function $\Psi$ go through all the basis functions in $V^k_h$.  In particular, the proposed SLDG-LDG scheme consists of the following two steps: Step 1.1 and 1.2.

\noindent
{\bf Step 1.1: Evaluation of Term I of \eqref{eq3:3:2} as in the SLDG method \cite{cai2017high}.}
To evaluate $\int_{I^{n+1,n}_j} u^n\psi^{n+1,n} \ dx$, we propose the procedures below.

	\noindent
	{\bf Step 1.1a: Reconstruct test function $\psi^{n+1,n}$ through interpolation.}		
	Choose $k+1$ interpolation points  $\{x_{j,i_{gl}}\}^{k+1}_{i_{gl} = 1}$ such as the Gauss-Lobatto (GL) points over $I_j$ at $t^{n+1}$ and locate the characteristic feet $\{x^{n+1,n}_{j,i_{gl} }\}^{k+1}_{i_{gl} = 1}$ at $t^n$ by solving
	\beq
		\begin{cases}
		\frac{d \widetilde{x} (t)}{dt}	=	a(\widetilde{x} (t),t)\\
		\widetilde{x} (t^{n+1})		=	x_{j,i_{gl} }
		\end{cases}
	\eeq
	with high order numerical integrators. In our implementation, a fourth order RK method is applied. 
	Thus, $\psi^{n+1,n}(x^{n+1,n}_{j, i_{gl} }) = \Psi(x_{j,i_{gl} }), \text{for\ } i_{gl} = 1, \cdots, k+1$. 
	We then construct a degree $k$ polynomial $\Psi^{n+1,n}_\star(x)$ interpolating the $\psi^{n+1, n}$ function at characteristic feet $\{x^{n+1,n}_{j, i_{gl}}\}^{k+1}_{i_{gl}=1}$ located over upstream interval $I^{n+1,n}_j$ in Step 1.1a, see Figure 1(b).
		
	\noindent
	{\bf Step 1.1b: Integrating Term I by summation over sub-intervals.}
		From Figure 1(a), we can see that there are two intersections $I^{n+1, n}_{j,1}=[x_{\jL}^{n+1,n},x_{\jL}]$ and $I^{n+1, n}_{j,2}=[x_{\jL},x_{\jR}^{n+1,n}]$ between $I^{n+1, n}_j$ and the background element $I_j$. In general, $I^{n+1, n}_j = \bigcup\limits_lI^{n+1, n}_{j,l}$
	where $l$ is the index for sub-intervals. Term I is approximated by
	\begin{align} \label{eq3:2:1}
	\int_{I^{n+1, n}_j} u^n \psi^{n+1,n} \ dx \approx \sum_l \int_{I^{n+1, n}_{j,l}} u^n \Psi^{n+1,n}_\star \ dx.
	\end{align}
	On each of these subintervals $I^{n+1, n}_{j,l}$, $u^n \Psi^{n+1, n}_\star$ is continuous and its integration can be approximated by quadrature rules. Notice that $u^n$ is discontinuous across cell boundaries.



\noindent
{\bf Step 1.2: Evaluation of Term II of \eqref{eq3:3:2} along characteristics intervals.}
There are two technical components involved in this step: one is an LDG approximation to the second order derivative term $u_{xx}$ together with a proper evaluation of $\int_{\widetilde{I}_j^{n+1,n}(t)} [\epsilon u_{xx}+g] \psi \ dx$; the other is the high order temporal discretization by a RK method for
\beq \label{eq3:3:2_df}
	\f{d}{dt} \int_{\widetilde{I}_j^{n+1,n}(t)} u \psi\ dx	=	\int_{\widetilde{I}_j^{n+1,n}(t)}  [\epsilon u_{xx}+g] \psi \ dx,
\eeq
which is the time differential form of eq.~\eqref{eq3:3:2}.
We will first discuss the evaluation of Term II coupled with a simple backward Euler time discretization. Then we will extend the idea to high order time integration by employing diagonally implicit RK (DIRK) methods. The diagonally implicit property allows one to solve a linear system for the current RK stage only, greatly reducing computational complexity and cost.
	
	
	\noindent
	{\bf Step 1.2a: LDG approximation of $u_{xx}$ \cite{cockburn1998local}.} We use the LDG formulation to seek $p\in V^k_h$ approximating $u_{xx}$. In particular, $p=u_{xx}$ can be rewritten as a first order system
	\beq \label{eq2:11}
		\begin{cases}
		p=q_x,\\
		q=u_x.
		\end{cases}
	\eeq
	
Then, we seek $p,q \in V_h^k$ such that, for all test functions $v,w\in P^k(I_j)$,
	\begin{subequations} \label{eq2:12}	
		\begin{align}
		(p,v)_{I_j}		&=	\hat{q}_{\jR}v^{-}_{\jR}-\hat{q}_{\jL}v^{+}_{\jL}-(q,v_x)_{I_j},	\label{eq2:12a}\\
		(q,w)_{I_j} 	&=	\hat{u}_{\jR}w^{-}_{\jR}-\hat{u}_{\jL}w^{+}_{\jL}-(u,w_x)_{I_j},	\label{eq2:12b}
		\end{align}
	\end{subequations}
	where $(\cdot,\cdot)_{I_j}$ stands for the $L^2$ inner product on interval $I_j$, and
 	$\hat{\cdot}$ denotes the numerical fluxes defined at the cell interfaces, which are taken as the alternating fluxes for stability consideration
	\begin{align} \label{eq2:13}
	\hat{q} = q^{-}, 		\quad    \hat{u} = u^{+};
	\quad \text{or}				 \quad
	\hat{q} = q^{+},	 	\quad    \hat{u} = u^{-}.
	\end{align}

	Notice that $q$ can be solved explicitly in terms of $u$ from $(\ref{eq2:12b})$; and also $p$ can 	be solved explicitly from $q$ from \eqref{eq2:12a}. In short, $p = u_{xx}$ can be computed locally by using $u$ from three nearby elements, namely $I_{j-1}$, $I_j$ and $I_{j+1}$.

	
	\noindent
	{\bf Step 1.2b. DIRK methods for accurate evaluations of the time integral.}
		
	We start from a first order backward Euler time discretization of \eqref{eq3:3:2_df}:
	\begin{align} \label{eq3:4}
	(u^{n+1},\Psi)_{I_j}-(u^n,\psi^{n+1, n})_{I^{n+1, n}_j}	=	\Dt \left( \epsilon u^{n+1}_{xx}+g^{n+1},\Psi \right)_{I_j}.
	\end{align}
	After rearranging the terms in eq. $(\ref{eq3:4})$, we obtain
	\begin{align} \label{eq3:5}
	(u^{n+1},\Psi)_{I_j}-\epsilon \Dt (u^{n+1}_{xx},\Psi)_{I_j}	=	( u^n,\psi^{n+1,n})_{I^{n+1, n}_j}+	\Dt(g^{n+1},\Psi )_{I_j}.
	\end{align}
	For notational simplicity of the presentation, above we let $\left(\epsilon u_{xx},\Psi \right)_{I_j}$ represent the LDG discretization of the diffusion term, without writing out all the flux and volume integral terms from integration-by-part in an LDG formulation.

	With the test function $\Psi$ going through all basis functions in $V^k_h$, we can formulate a linear system for degrees of freedom (i.e., the coefficients of the basis) of ${\bf u}^{n+1}$ as
	\beq \label{eq: lsystem}
	B_1 {\bf u}^{n+1} = {\bf f}_1,
	\eeq
	which can be solved by an iterative method, e.g. GMRES.
	Here the matrix $B_1 = I - \epsilon \Dt D_{\Delta}$, where $D_{\Delta}$ comes from an LDG discretization of $u_{xx}$; and ${\bf f}_1$ can be obtained from evaluating right-hand side (RHS) terms of \eqref{eq3:5}. The details in constructing the sparse matrix $B_1$ are provided in the Appendix.

	
	To attain higher order accuracy in time, we propose to employ high order DIRK methods. Here, we demonstrate the scheme with an L-stable, two-stage, second-order DIRK method, termed as DIRK2 \cite{ascher1997implicit} (as in Table \ref{tab3.2}) that involves two stages: $t^{(1)}=t^n+\nu \Dt$ and $t^{(2)}=t^{n+1}$.
	
	\begin{table}[!ht]	
	\begin{center}
	\begin{tabular}{c	|		c	c   c}
	
		$\nu$	&			&	$\nu$ 	&	0	\\
		1		&			&	$1-\nu$ 	& $\nu$	\\
		\hline
 				&			&	$1-\nu$ 	& $\nu$
	\end{tabular}
	, \qquad $\nu = 1-\sqrt{2}/2$.
	\end{center}
	\caption{DIRK2.}
	\label{tab3.2}
	\end{table}

 For the convenience of our presentations for DIRK discretization along characteristics, we introduce the following notations
\beq
\label{eq: 1d_notations}
\psi^{\tau_2, \tau}(x), \qquad I^{\tau_2, \tau}_j.
\eeq


\bit

\item[(i)]  $\psi^{\tau_2, \tau}(x)$ denotes the solution $\psi(x, \tau)$ satisfying the final value problem \eqref{eq3:3a}.  Here $\tau_2$ and $\tau$ may refer to intermediate RK stages in a DIRK discretization. Assuming DIRK has $s$ stage
$t^n <t^{(1)}<\cdots<t^{(s)} = t^{n+1},$ $\psi^{t^{( ii )}, t^{(jj)}}(x)$ with $1\le jj \le ii \le s$, denotes the function $\psi(x)$ at $t=t^{(ii)}$ satisfying the adjoint problem \eqref{eq3:3a}
with the final value $\psi(x, t^{( ii )}) = \Psi(x) \in V^k_h$.
For notational simplicity, we let  $\psi^{( ii ), ( jj )}\doteq \psi^{t^{( ii )}, t^{(jj )}}(x)$.

\item[(ii)] $I^{\tau_2, \tau}_j =  [x^{\tau_2, \tau}_{\jL}, x^{\tau_2, \tau}_{\jR}]$ with
$x^{\tau_2, \tau}_{j\pm\frac12}$ being the solution to eq.~\eqref{eq: char_inter} at time $\tau$ with $\widetilde{x}_{j \pm \f12} (\tau_2) = x_{j\pm\f12}$.
For example, $I^{t^{( ii )}, t^{( jj )}}_j$ $\doteq$ $[\widetilde{x}_{\jL}(t^{( jj )}),$ $ \widetilde{x}_{\jR}(t^{( jj )})]$ with $\widetilde{x}_{j\pm\frac12}(t)$ satisfying \eqref{eq: char_inter} and the final value $\widetilde{x}_{j \pm \f12} (t^{( ii )}) = x_{j\pm\f12}$, respectively.
For simplicity, we let $I^{(ii ),(jj ) }_j \doteq I^{t^{( ii )}, t^{( jj )}}_j$.

\eit

Following the above notations, the proposed SLDG-LDG scheme when coupled with a DIRK2 method (see Table \ref{tab3.2}) along characteristics curves can be implemented as below.
		
		\bit	
		
	
		\item[(i)]  In the \textbf{first time stage} $\tau_2 = t^{(1)}$, as shown in Figure \ref{schematic_dirk2(a)}, for each Eulerian background cell $I_j$, we solve the numerical solution $u^{(1)}\in V_h^k$ at intermediate stage $t^{(1)}$ from the following formulation
		\beq \label{eq3:7}
		(u^{(1)},\Psi)_{I_j}-(u^n,\psi^{(1), n})_{I_j^{(1), n}}		=	\Dt \cdot \nu \left( \epsilon u^{(1)}_{xx}	+ g^{(1)},\Psi \right)_{I_j}.
		\eeq
	    Note that the formulation is equivalent to applying a first order backward Euler method with $\nu \Dt$.
		Implementation-wise, $(\ref{eq3:7})$ can be written as
		$B_2 {\bf u}^{(1)}	=	{\bf f}_2$,
		where $B_2$ and $\bf{f}_2$ can be collected in a similar fashion as those for matrix $B_1$ and vector ${\bf f}_1$ in eq. \eqref{eq: lsystem}.
		

\definecolor{officegreen}{rgb}{0.0, 0.5, 0.0}
\definecolor{darkpastelgreen}{rgb}{0.01, 0.75, 0.24}
\definecolor{burntorange}{rgb}{0.8, 0.33, 0.0}
\definecolor{armygreen}{rgb}{0.29, 0.33, 0.13}
\definecolor{applegreen}{rgb}{0.55, 0.71, 0.0}
\definecolor{ao}{rgb}{0.0, 0.5, 0.0}
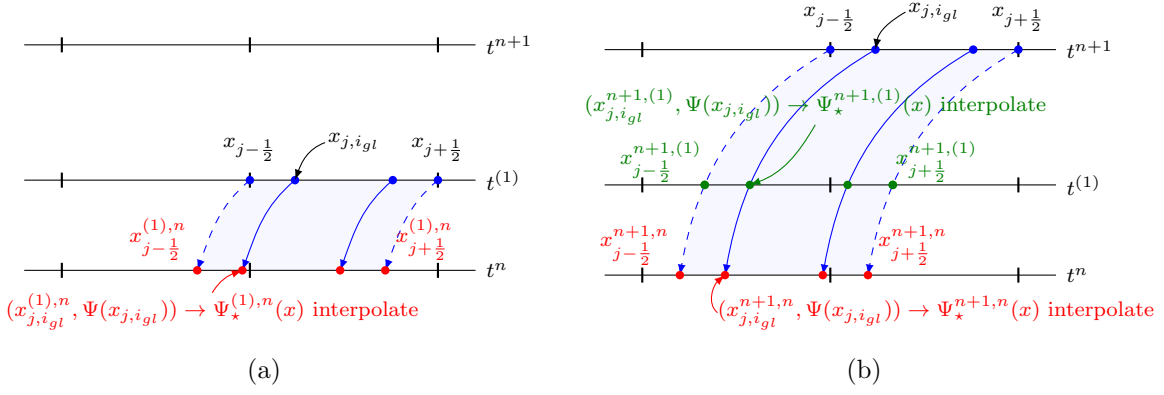
\begin{figure}[h!]
\centering
\subfigure[]{
\label{schematic_dirk2(a)}
\begin{tikzpicture}[scale=1.0]
    \draw[white,fill=blue!3] (0,1.2) to[out=220,in=70] (0-0.7,0)  -- (2.5-0.7,0) to[out=70,in=220] (2.5, 1.2)
      -- cycle;
    \draw[black]                 (-3,0) node[left] { } -- (3,0)
                                        node[right]{\scriptsize$t^{n}$};

    \draw[black]                 (-3,1.2) node[left] { } -- (3,1.2)
                                        node[right]{\scriptsize$t^{(1)}$};

    \draw[black]                 (-3,3) node[left] { } -- (3,3)
                                        node[right]{\scriptsize$t^{n+1}$};
    \draw[thick] (0, 3-0.1) -- (0, 3+0.1) node[above] { };
    \draw[thick] (2.5, 3-0.1) -- (2.5, 3+0.1) node[above] { };
    \draw[thick] (-2.5, 3-0.1) -- (-2.5, 3+0.1) node[above] {};
    \draw[thick] (0, 0-0.1) -- (0, 0+0.1) node[above] { };
    \draw[thick] (2.5, 0-0.1) -- (2.5, 0+0.1) node[above] { };
    \draw[thick] (-2.5, 0-0.1) -- (-2.5, 0+0.1) node[above] { };
    \draw[thick] (0, 1.2-0.1) -- (0, 1.2+0.1) node[above] { \scriptsize$x_{j-\frac12}$};
    \draw[thick] (2.5, 1.2-0.1) -- (2.5, 1.2+0.1) node[above] {\scriptsize$x_{j+\frac12}$  };
    \draw[thick] (-2.5, 1.2-0.1) -- (-2.5, 1.2+0.1) node[above] { };
 \draw[-latex,dashed,blue]( 0, 1.2 )node[left,scale=1.3]{$$}
        to[out=220,in=70] ( 0-0.7 ,0) node[below=2pt] { };
 \draw[-latex,dashed,blue]( 2.5, 1.2 )node[left,scale=1.3]{$$}
        to[out=220,in=70] ( 2.5-0.7 ,0) node[below=2pt] { };
 \fill [blue] ( 0, 1.2 ) circle (1.6pt) node[right] {};
 \fill [blue] ( 2.5, 1.2 ) circle (1.6pt) node[right] {};

 \fill [red] ( 0-0.7, 0 ) circle (1.6pt) node[above left=-0.1 ] {\scriptsize$x_{j-\frac12}^{(1),n}$};
 \fill [red] ( 2.5-0.7, 0 ) circle (1.6pt) node[above right=-0.1] {\scriptsize$x_{j+\frac12}^{(1),n}$};
 \draw[-latex,red ]( - 0.5  , -0.3 )node[below=-4pt,scale=1.0]{\scriptsize$(x_{j,i_{gl}}^{(1),n}, \Psi(x_{j,i_{gl}})) \rightarrow \Psi_\star^{(1),n}(x) \text{ interpolate} $ }
        to[out= 60,in=200] ( 0+0.6-0.7, 0) node[below=2pt] { };
 \draw[-latex, blue]( 0+0.6, 1.2 )node[left,scale=1.3]{$$}
        to[out=220,in=70] ( 0-0.7+0.6 ,0) node[below=2pt] { };
 \draw[-latex, blue]( 2.5-0.6, 1.2 )node[left,scale=1.3]{$$}
        to[out=220,in=70] ( 2.5-0.7-0.6 ,0) node[below=2pt] { };
 \fill [blue] ( 0+0.6, 1.2 ) circle (1.6pt) node[right] {};
 \fill [blue] ( 2.5-0.6, 1.2 ) circle (1.6pt) node[right] {};

 \fill [red] ( -0.7+0.6, 0 ) circle (1.6pt) node[above left=-0.1 ] { };
 \fill [red] ( 2.5-0.7 -0.6, 0 ) circle (1.6pt) node[above right=-0.1] { };

 \draw[-latex ]( 0+1, 1.7 )node[right=-3pt,scale=1.0]{\scriptsize$x_{j,i_{gl}}$}
        to[out=200,in=70] ( 0+0.6, 1.2) node[below=2pt] { };

\end{tikzpicture}
}
\subfigure[]{
\label{schematic_dirk2(b)}
\begin{tikzpicture}[scale=1.0]
    \draw[white,fill=blue!3] (0,3) to[out=210,in=80] (0-2.,0)  -- (2.5-2,0) to[out=80,in=210] (2.5 ,3)
      -- cycle;
    \draw[black]                 (-3,0) node[left] { } -- (3,0)
                                        node[right]{\scriptsize$t^{n}$};

    \draw[black]                 (-3,1.2) node[left] { } -- (3,1.2)
                                        node[right]{\scriptsize$t^{(1)}$};

    \draw[black]                 (-3,3) node[left] { } -- (3,3)
                                        node[right]{\scriptsize$t^{n+1}$};
    \draw[thick] (0, 3-0.1) -- (0, 3+0.1) node[above] {\scriptsize$x_{j-\frac12}$};
    \draw[thick] (2.5, 3-0.1) -- (2.5, 3+0.1) node[above] {\scriptsize$x_{j+\frac12}$};
    \draw[thick] (-2.5, 3-0.1) -- (-2.5, 3+0.1) node[above] {};
    \draw[thick] (0, 0-0.1) -- (0, 0+0.1) node[above] { };
    \draw[thick] (2.5, 0-0.1) -- (2.5, 0+0.1) node[above] { };
    \draw[thick] (-2.5, 0-0.1) -- (-2.5, 0+0.1) node[above] { };
    \draw[thick] (0, 1.2-0.1) -- (0, 1.2+0.1) node[above] { };
    \draw[thick] (2.5, 1.2-0.1) -- (2.5, 1.2+0.1) node[above] { };
    \draw[thick] (-2.5, 1.2-0.1) -- (-2.5, 1.2+0.1) node[above] { };
 \draw[-latex,dashed,blue]( 0, 3 )node[left,scale=1.3]{$$}
        to[out=210,in=80] ( 0-2,0) node[below=2pt] { };
 \draw[-latex,dashed,blue]( 2.5, 3 )node[left,scale=1.3]{$$}
        to[out=210,in=80] ( 2.5-2,0) node[below=2pt] { };
 \fill [blue] ( 0, 3 ) circle (1.6pt) node[right] {};
 \fill [blue] ( 2.5, 3 ) circle (1.6pt) node[right] {};

 \fill [red] ( 0-2, 0 ) circle (1.6pt) node[above left=-0.1 ] {\scriptsize$x_{j-\frac12}^{n+1,n}$};
 \fill [red] ( 2.5-2, 0 ) circle (1.6pt) node[above right=-0.1] {\scriptsize$x_{j+\frac12}^{n+1,n}$};
 \draw[-latex ,blue]( 0+0.6, 3 )node[left,scale=1.3]{$$}
        to[out=210,in=80] ( 0-2+0.6,0) node[below=2pt] { };
 \draw[-latex, blue]( 2.5-0.6, 3 )node[left,scale=1.3]{$$}
        to[out=210,in=80] ( 2.5-2-0.6,0) node[below=2pt] { };
 \fill [blue] ( 0+0.6, 3 ) circle (1.6pt) node[right] {};
 \fill [blue] ( 2.5-0.6, 3 ) circle (1.6pt) node[right] {};

 \fill [red] ( 0-2+0.6, 0 ) circle (1.6pt) node[above left=-0.1 ] { };
 \fill [red] ( 2.5-2-0.6, 0 ) circle (1.6pt) node[above right=-0.1] { };
 \draw[-latex ]( 0+1, 3.5 )node[right=-3pt,scale=1.0]{\scriptsize$x_{j,i_{gl} }$}
        to[out=210,in=80] ( 0+0.6, 3) node[below=2pt] { };
 \draw[-latex,red]( -1.5 , -0.5 )node[right=-4pt,scale=1.0]{\scriptsize$(x_{j,i_{gl}}^{n+1,n}, \Psi(x_{j,i_{gl} })) \rightarrow \Psi_\star^{n+1,n}(x) \text{ interpolate} $ }
        to[out=140,in=220] ( 0+0.6-2, 0) node[below=2pt] { };

 \fill [ao] ( 0-1.67+0.6, 1.2 ) circle (1.6pt) node[above left  ] { };
 \fill [ao] ( 2.5-1.67-0.6, 1.2 ) circle (1.6pt) node[above right ] { };

 \fill [ao] ( 0-1.67 , 1.2 ) circle (1.6pt) node[above left=-0.1 ] { };
 \fill [ao] ( 2.5-1.67, 1.2 ) circle (1.6pt) node[above right=-0.1] { };

  \fill [ao] ( 0-1.67 , 1.2 ) circle (1.6pt) node[ above left=-3pt  ] {\scriptsize$x_{j-\frac12}^{n+1,(1)}$};
 \fill [ao] ( 2.5-1.67, 1.2 ) circle (1.6pt) node[ above right=-3pt] {\scriptsize$x_{j+\frac12}^{n+1,(1)}$};
 \draw[-latex, ao]( -0.2  , 2 )node[above= -4pt,scale=1.0]{\scriptsize$(x_{j,i_{gl}}^{n+1,(1)}, \Psi(x_{j,i_{gl} })) \rightarrow \Psi_\star^{n+1,(1)}(x) \text{ interpolate} $ }
        to[out=240,in=20] ( 0+0.6-1.67, 1.2) node[below=2pt] { };
\end{tikzpicture}
}
\caption{Schematic illustration of 1D SLDG-LDG formulation coupled with DIRK2. Left: First time stage $t^{(1)}$. Right: Second time stage $t^{n+1}$.}
\label{schematic_dirk2}
\end{figure}

		\item[(ii)] In the \textbf{second time stage} $\tau_2 = t^{n+1}$, as shown in Figure \ref{schematic_dirk2(b)},we have the following formulation
		\begin{multline} \label{eq3:11}
		(u^{n+1},\Psi)_{I_j}-(u^n,\psi^{n+1, n})_{I_j^{n+1, n}}		=	\\
		\Dt \left[(1-\nu) \cdot \left( \epsilon u^{(1)}_{xx}+g^{(1)},\psi^{n+1, (1)} \right)_{I^{n+1, (1)}_j}+
		\nu \cdot \left( \epsilon u^{n+1}_{xx}+g^{n+1},\Psi \right)_{I_j}\right].
		\end{multline}
		Notice that $\psi^{n+1, n}, \psi^{n+1, (1)}$ are in general not polynomials, yet can be well approximated by polynomials as in Step 1.1b. Reorganizing terms in \eqref{eq3:11} gives
		\begin{multline} \label{eq3:13}
		(u^{n+1},\Psi)_{I_j}-\Dt \cdot \nu \epsilon( u^{n+1}_{xx},\Psi)_{I_j}	=
		(u^n,\psi^{n+1, n})_{I_j^{n+1, n}}+
		\Dt \cdot (1-\nu)\epsilon(u^{(1)}_{xx},\psi^{n+1, (1)})_{I^{n+1, (1)}_j}\\
		+\Dt \cdot (1-\nu) \left( g^{(1)},\psi^{n+1, (1)} \right)_{I^{n+1, (1)}_j}+
		\Dt \cdot \nu \left( g^{n+1},\Psi \right)_{I_j}.
		\end{multline}
		
		Notice that the first term on RHS of \eqref{eq3:13} can be evaluated as in Step 1.1; the second term on RHS of \eqref{eq3:13} can be evaluated by first computing $u^{(1)}_{xx}$ in an LDG fashion with $u^{(1)}$ given from the first stage of RK computation, and then applying {\em the same procedure as in Step 1.1} to evaluate $(u^{(1)}_{xx},\psi^{n+1, (1)})_{I^{n+1, (1)}_j}$; the latter two terms involving $g$ can be directly evaluated by quadrature rules. Implementation-wise, \eqref{eq3:13} can be written as
		$B_3 {\bf u}^{n+1}={\bf f}_3$ with $B_3$ the same matrix as $B_2$ in the first time stage.
		
		\eit


\begin{rem} The above procedure is for a two-stage second-order DIRK discretization of diffusion and source terms. Such a procedure can be generalized to any DIRK methods. For some high order DIRK discretization methods we use for the numerical experiment, the associated Butcher tableaus are provided in the Appendix including the L-stable, three-stage, third-order DIRK method in Table $\ref{tab_dirk3}$ \cite{calvo2001linearly}, the L-stable, five-stage, fourth-order method in Table $\ref{tab_dirk4}$ \cite{wanner1991solving}. Notice that, we use the SLDG method for the convection term and an implicit discretization for the diffusion and source terms; thus the time stepping size allowed could be much larger than that of an explicit Eulerian RKDG method.
\end{rem}
\begin{rem}
All DIRK time discretization methods we employ in the paper have the property that $a_{ii}\neq0$, $\forall i=1, \cdots s$ and the method are stiffly accurate; these properties are important for numerical stability.
\end{rem}


\subsection{Scheme formulation: 2D case}
\label{2D_form}

In this subsection, we generalize the above 1D SLDG-LDG scheme for solving the following 2D problem
\beq \label{eq2:4:2d}
u_t +(a(x,y,t) u)_x +(b(x,y,t) u)_y 	=	 \epsilon \Delta u + g.
\eeq
We begin with a partition of the 2D domain as $\Omega = \{E_{j}\}^J_{j=1}$. The numerical solutions and test functions belong to the finite dimensional piecewise approximation space
\beq \label{eq2:4:space}
V^k_h = \{v_h:v_h|_{E_j} \in P^k(E_j), j=1,2,\cdots,J \}
\eeq
where $P^k(E_j)$ denotes the set of polynomials of degree at most $k$ over each element $E_j$.



Similar to the 1D case and the strategy in \cite{cai2017high}, we consider the adjoint problem for the test function $\psi=\psi(x,y,t)$ satisfying
\beq \label{eq2:4:ad}
\psi_t+a(x,y,t)\psi_x+b(x,y,t)\psi_y		=	0, \quad t \in [\tau_1, \tau_2].
\eeq
with $\psi(x, y, \tau_2) = \Psi(x, y) \in V^k_h$.
A similar observation as in the 1D case is that the solution to \eqref{eq2:4:ad} stays constant along the characteristic curves governed by
\[
\f{d \widetilde{x} (t)}{dt}=a(\widetilde{x}(t),\widetilde{y}(t),t), \quad \f{d \widetilde{y} (t)}{dt}=b(\widetilde{x} (t), \widetilde{y} (t),t).
\]
Let $\widetilde{E}_j(t)$ be the dynamic moving cell bounded by characteristics curves emanating from the edges of Eulerian cell $E_j$ at $t^{n+1}$ and $E^{n+1, n}_j$ be the upstream cell as $\widetilde{E}_j(t=t^n)$, see Figure $\ref{schematic_2d_a}$. A 2D generalization of Proposition $\ref{prop_1D}$ is established in the following.


\begin{prop}\label{prop_2D}

Consider the 2D problem \eqref{eq2:4:2d} and the adjoint problem \eqref{eq2:4:ad} for the test function $\psi$,
then the following identity holds
\beq \label{eq2:4:prop}
\iint_{E_j}u^{n+1}\Psi \ dx\ dy -\underbrace{\iint_{E^{n+1, n}_j} u^n\psi^{n+1, n} \ dx\ dy}_{\text{Term I}} = \underbrace{\int^{t^{n+1}}_{t^n} \iint_{\widetilde{E}_j(t)} [\epsilon \Delta u+g] \psi \ dx \ dy\ dt}_{\text{Term II}},
\eeq
where $\psi^{n+1, n}$ is the solution to the adjoint problem \eqref{eq2:4:ad} at $t^n$ with $\psi(x,y,t^{n+1}) = \Psi \in V^k_h$.
\end{prop}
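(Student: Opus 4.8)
The plan is to mirror the proof of Proposition~\ref{prop_1D} exactly, replacing the 1D flux divergence by its 2D counterpart and the line cancellation by a surface cancellation. First I would multiply the 2D equation \eqref{eq2:4:2d} by the test function $\psi$ and the adjoint equation \eqref{eq2:4:ad} by the solution $u$, add them, and integrate over the space-time region $\mathcal{K}$ bounded by $E_j$ at $t^{n+1}$ and $E^{n+1,n}_j$ at $t^n$ from above and below, and by the lateral characteristic surfaces traced from the edges of $E_j$ on the sides. The advective and conservative terms combine so that the left-hand side of the sum becomes a pure space-time divergence, namely
\beq
\iiint_{\mathcal{K}} \left( [u\psi]_t + [a u\psi]_x + [b u\psi]_y \right) \ dx\ dy\ dt = \iiint_{\mathcal{K}} [\epsilon \Delta u + g]\psi \ dx\ dy\ dt,
\eeq
which is the direct 2D analogue of \eqref{eq3:3:1}. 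The right-hand side is already Term~II of \eqref{eq2:4:prop}.

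Next I would apply the divergence theorem in the $(x,y,t)$ space to the left-hand side, turning the volume integral into a flux integral over the closed boundary $\partial\mathcal{K}$. The boundary splits into three pieces: the top face $E_j \times \{t^{n+1}\}$, the bottom face $E^{n+1,n}_j \times \{t^n\}$, and the lateral characteristic surface $\mathcal{S}$ swept out by the edges of $E_j$. On the top face the outward normal is purely in $+t$, giving $\iint_{E_j} u^{n+1}\Psi\, dx\, dy$; on the bottom face the normal is purely in $-t$, giving $-\iint_{E^{n+1,n}_j} u^n \psi^{n+1,n}\, dx\, dy$, which is Term~I. These two pieces already reproduce the left-hand side of \eqref{eq2:4:prop}, so it remains to show that the flux through $\mathcal{S}$ vanishes.

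The crux of the argument, and the main obstacle relative to the 1D case, is this lateral cancellation. In 1D the side boundaries are curves and the flux $[u\psi]_t \cdot n_t + [au\psi]_x \cdot n_x$ vanishes pointwise because the characteristic direction $(1,a)$ is tangent to the boundary curve. In 2D the side boundary $\mathcal{S}$ is a two-dimensional surface, and I would argue that at each point the space-time flux vector $(au\psi,\, bu\psi,\, u\psi)$ has no component along the outward normal to $\mathcal{S}$: indeed, the characteristic direction $(a,b,1)$ lies in the tangent plane of $\mathcal{S}$ by the very definition of $\widetilde{E}_j(t)$, whose lateral edges are swept by characteristic trajectories satisfying $d\widetilde{x}/dt=a$, $d\widetilde{y}/dt=b$. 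Since the flux vector is exactly $u\psi\cdot(a,b,1)$, it is tangent to $\mathcal{S}$ and hence orthogonal to the outward normal, so the surface integral over $\mathcal{S}$ is zero. Establishing that the characteristic direction spans the tangent plane of the swept surface is the one step that requires genuine care in 2D, since the surface is generated by a one-parameter family of curves and its tangent plane is spanned by the characteristic direction together with the along-edge direction; I would verify that the flux, being parallel to a single characteristic direction, is automatically tangent. With this cancellation in hand, collecting the top and bottom contributions yields \eqref{eq2:4:prop}, completing the proof.
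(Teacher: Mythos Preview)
Your proposal is correct and follows exactly the approach the paper intends: the paper does not give an explicit proof of Proposition~\ref{prop_2D} but leaves it as the evident 2D analogue of the 1D argument for Proposition~\ref{prop_1D}, and your write-up is precisely that analogue. Your careful verification that the space-time flux vector $u\psi\,(a,b,1)$ is tangent to the lateral characteristic surface $\mathcal{S}$ (and hence contributes nothing after the divergence theorem) is the one genuinely new ingredient in passing from 1D to 2D, and you have handled it correctly.
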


Similar to the 1D case, the update of $u^{n+1}\in V^k_h$ depends on proper evaluations of Term I and Term II of eq. \eqref{eq2:4:prop}. We again refer to \cite{cai2017high} for detailed procedures of evaluating Term I and only summarize main steps below. The computation of Term II consists of two parts: the first part is approximating $\Delta u$ by using 2D LDG spatial discretization, and the second part is high order time integration over the dynamic moving cell $\tilde{E}_j(t)$ along the characteristics. These two parts share the same spirit with Step 1.2 in Subsection $\ref{1D_form}$. Below we outline the main procedures for a 2D problem.


\begin{figure}[htbp]\small
\centering
\subfigure[]{
\label{schematic_2d_a}

\begin{tikzpicture}

\draw[black,thin] (0,0.5) node[left] {} -- (5.5,0.5) node[right]{};
\draw[black,thin] (0,2.) node[left] {} -- (5.5,2) node[right]{};
\draw[black,thin] (0,3.5) node[left] {} -- (5.5,3.5) node[right]{};
\draw[black,thin] (0,5 ) node[left] {} -- (5.5,5) node[right]{};

\draw[black,thin] (0.5,0) node[left] {} -- (0.5,5.5) node[right]{};
\draw[black,thin] (2,0) node[left] {} -- (2,5.5) node[right]{};
\draw[black,thin] (3.5,0) node[left] {} -- (3.5,5.5) node[right]{};
\draw[black,thin] (5,0) node[left] {} -- (5,5.5) node[right]{};

\fill [blue] (3.5,3.5) circle (2pt) node[] {};
\fill [blue] (5,3.5) circle (2pt) node[] {};
\fill [blue] (3.5,5) circle (2pt) node[below right] {$E_j$} node[above left] {$v_4$};
\fill [blue] (5,5) circle (2pt) node[] {};

\draw[thick,blue] (3.5,3.5) node[left] {} -- (3.5,5) node[right]{};
\draw[thick,blue] (3.5,3.5) node[left] {} -- (5,3.5) node[right]{};
\draw[thick,blue] (3.5,5) node[left] {} -- (5,5) node[right]{};
\draw[thick,blue] (5,3.5) node[left] {} -- (5,5) node[right]{};

\fill [red] (1.,1) circle (2pt) node[above right,black] {};
\fill [red] (3,1) circle (2pt) node[] {};
\fill [red] (1,2.5) circle (2pt) node[below right] {$E_j^{n+1, n}$} node[above left] {$v_4^{n+1,n}$};
\fill [red] (2.5,2.5) circle (2pt) node[] {};

\draw[-latex,dashed](3.5,5)node[right,scale=1.0]{} to[out=240,in=70] (1,2.50) node[] {};

\draw (0.5+0.01,2-0.01) node[fill=white,below right] {$E_l$};

\draw [red,thick] (1,1)node[right,scale=1.0]{} to[out=20,in=150] (2,0.7) node[] {};

\draw [red,thick] (2,0.7)node[right,scale=1.0]{} to[out=330,in=240] (3,1) node[] {};

\draw [red,thick] (1,2.5)node[right,scale=1.0]{} to[out=310,in=90] (1.1,2) node[] {};

\draw [red,thick] (1.1,2)node[right,scale=1.0]{} to[out=270,in=80] (1,1) node[] {};

\draw [red,thick] (1,2.5)node[right,scale=1.0]{} to[out=10,in=180] (2.5,2.5) node[] {};

\draw [red,thick] (3,1)node[right,scale=1.0]{} to[out=80,in=280] (2.5,2.5) node[] {};

\end{tikzpicture}
}
\subfigure[]{
\label{schematic_2d_b}

\begin{tikzpicture}[scale = 1.3]

\draw[black,thin] (0,0.5) node[left] {} -- (4,0.5) node[right]{};
\draw[black,thin] (0,2.) node[left] {} -- (4,2) node[right]{};
\draw[black,thin] (0,3.5) node[left] {} -- (4,3.5) node[right]{};
\draw[black,thin] (0.5,0) node[left] {} -- (0.5,4) node[right]{};
\draw[black,thin] (2,0) node[left] {} -- (2,4) node[right]{};
\draw[black,thin] (3.5,0) node[left] {} -- (3.5,4) node[right]{};

\fill [red] (1.,1) circle (2pt) node[above right,black] {$E_{j,l}^{n+1, n}$};
\fill [red] (3,1) circle (2pt) node[] {};
\fill [red] (1,2.5) circle (2pt) node[below right] {$E_j^{n+1, n}$} node[above left] {};
\fill [red] (2.5,2.5) circle (2pt) node[] {};

\draw (0.5+0.01,2-0.01) node[fill=white,below right] {};

\draw [red,thick] (1,1)node[right,scale=1.0]{} to[out=20,in=150] (2,0.7) node[] {};

\draw [red,thick] (2,0.7)node[right,scale=1.0]{} to[out=330,in=240] (3,1) node[] {};

\draw [red,thick] (1,2.5)node[right,scale=1.0]{} to[out=310,in=90] (1.1,2) node[] {};

\draw [red,thick] (1.1,2)node[right,scale=1.0]{} to[out=270,in=80] (1,1) node[] {};

\draw [red,thick] (1,2.5)node[right,scale=1.0]{} to[out=10,in=180] (2.5,2.5) node[] {};

\draw [red,thick] (3,1)node[right,scale=1.0]{} to[out=80,in=280] (2.5,2.5) node[] {};

\draw (0.5+0.01,2-0.01) node[fill=white,below right] {$E_l$};

\draw[-latex,ultra thick] (1,1)node[right,scale=1.0]{} to  (2,1) node[] {};

\draw[-latex,ultra thick]  (2,1)node[right,scale=1.0]{} to (2,2) node[] {};

\draw[-latex,ultra thick]  (2,2)node[right,scale=1.0]{} to (1,2) node[] {};

\draw[-latex,ultra thick]   (1,2)node[right,scale=1.0]{} to (1,1) node[] {};

\draw [thick] (1,1)-- (3,1) node[] {};
\draw [thick] (1,2.5) -- (1,1) node[] {};
\draw [thick] (1,2.5)--(2.5,2.5) node[] {};
\draw [thick] (3,1)--(2.5,2.5) node[] {};

\end{tikzpicture}
}
\caption{Schematic illustration of the SLDG formulation with $P^1$ polynomial spaces in 2D. Left: upstream cell $E^{n+1, n}_j$. Right: Quadrilateral upstream cells.}
\label{schematic_2d}
\end{figure}
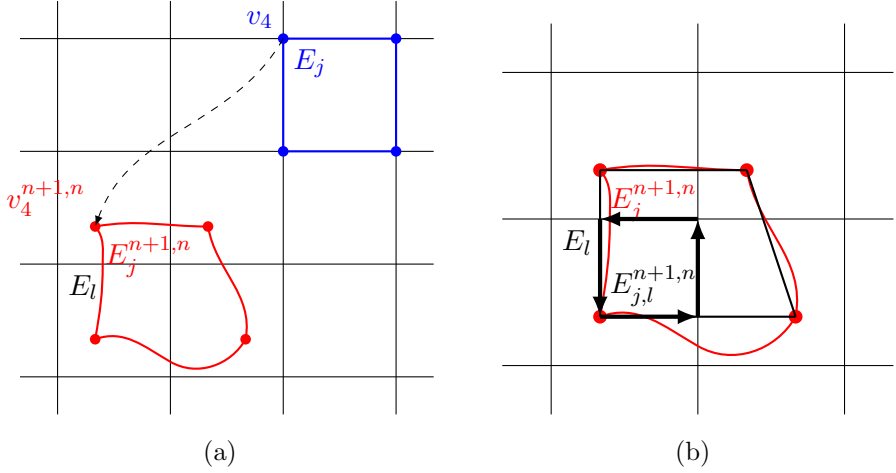



\noindent
{\bf Step 2.1: Evaluation of Term I of \eqref{eq2:4:prop}.} It's worth noting that when the velocity field is space and time dependent, the upstream cell $E^{n+1, n}_j$ might not be of quadrilateral shape. When they are approximated by a quadrilateral or a quadratic-curved (QC) quadrilateral, second or third-order spatial accuracy can be achieved, respectively. Here, as an example, we discuss the formulation with $P^1$ polynomial spaces.

	\noindent	
	{\bf Step 2.1a: Characteristic tracing.}
	Locate four vertices $\{v^{n+1,n}_q\}^4_{q=1} $ of upstream cell $E^{n+1, n}_j$ at $t^n$
	by solving the final value problem
	\beq \label{char_2d}
	\begin{cases}
	\f{d \widetilde{x} (t)}{dt}	=	a( \widetilde{x} (t), \widetilde{y} (t),t), 	\quad \widetilde{x} (t^{n+1})		=	x_{v_q},\\
	\f{d \widetilde{y} (t)}{dt}	=	b( \widetilde{x} (t), \widetilde{y} (t),t), 	\quad \widetilde{y} (t^{n+1})		=	y_{v_q}
	\end{cases}
	\eeq
	by high order numerical integrators the same way as in the 1D case, and $\{v_q\}^4_{q=1}$ with coordinates $(x_{v_q}, y_{v_q})$ are the four vertices of $E_j$.
	
	\noindent
	{\bf Step 2.1b: Reconstruction of the test function $\psi^{n+1,n}$ and decomposition of Term I.}
	It is known that $\psi(x,y,t)$ with adjoint problem \eqref{eq2:4:ad} stays constant along characteristics,
	\beq \label{char_2d:2}
	\psi^{n+1, n} (x(v^{n+1,n}_q),y(v^{n+1,n}_q))	=	\Psi(x(v_q),y(v_q)),	\quad q = 1,\cdots,4.
	\eeq
	We can reconstruct $P^k$ $ (k=1)$ polynomial $\Psi^{n+1, n}_\star(x)$ to approximate $\psi^{n+1, n}$ by a least-square strategy.
	Let $E^{n+1, n}_{j,l}$ be intersections between the upstream cell $E^{n+1, n}_j$ and the background cell $E_l$, see in Figure $\ref{schematic_2d_b}$. In general, $E^{n+1, n}_j = \bigcup\limits_{l \in \epsilon^{n+1, n}_j} E^{n+1, n}_{j,l}$ where $\epsilon^{n+1, n}_j \doteq \{l\vert E^{n+1, n}_{j,l}\neq \emptyset \ \text{where}\ E^{n+1, n}_{j,l} \doteq E^{n+1, n}_j\cap E_l \}$, and then Term I in $(\ref{eq2:4:prop})$ can be approximated with
	\beq \label{eq2:sum_2d}
	\iint_{E^{n+1, n}_j} u^n\psi^{n+1, n} \ dx\ dy \approx \sum\limits_{l \in \epsilon^{n+1, n}_j}\iint_{E^{n+1, n}_{j,l}} u^n\Psi^{n+1, n}_\star(x, y) \ dx\ dy.
	\eeq
	\noindent
	{\bf Step 2.1c: Line integral evaluation.}
	To evaluate the area integral $\iint_{E^{n+1, n}_{j,l}} u^n \Psi^{n+1, n}_\star(x, y) \ dx\ dy$, we can introduce two auxiliary functions $P(x,y)$ and $Q(x,y)$ satisfying
	\begin{equation*}
	-\f{\partial P}{\partial y}+\f{\partial Q}{\partial x}	=	u^n \Psi^{n+1, n}_\star(x, y).
	\end{equation*}
	Due to the Green's theorem, the area integral can be converted into the line integrals as
	\beq \label{eq2_lint_qd}
	\iint_{E^{n+1,n}_{j,l}} u^n \Psi^{n+1, n}_\star(x, y) \ dx\ dy		=	\oiint_{\partial E^{n+1, n}_{j,l}} P\ dx+Q\ dy,
	\eeq
	where quadrature rules can be directly applied along $\partial E^{n+1, n}_{j,l}$. This evaluation procedure is the same as in \cite{lauritzen2010conservative}.


\begin{rem}
Applying quadrilateral approximation to $P^2$ polynomial spaces will restrict us with the second-order accuracy in a general setting. This motivates us to use QC quadrilateral approximation to the upstream cells for higher order accuracy. There are two additional key steps. First, locate nine upstream points $\{v^{n+1,n}_q\}^9_{q=1}$ belonging to the upstream cell $E^{n+1, n}_j$, see Figure $\ref{fig:schematic_2d_p2_a}$, by solving \eqref{char_2d} with final values $\{v_q\}^9_{q=1}$ (nine uniformly distributed points at $E_j$). Second, approximate each side of the upstream cell with a quadratic curve by a parameterization and evaluation of the area integral through the line integrals. For more details, we refer to \cite{cai2017high}.
\end{rem}
\begin{figure}[htbp]
\centering
\subfigure[]{
\label{fig:schematic_2d_p2_a}
\begin{tikzpicture}


\draw[black,thin] (0,0.5) node[left] {} -- (5.5,0.5) node[right]{};

\draw[black,thin] (0,2.) node[left] {} -- (5.5,2) node[right]{};

\draw[black,thin] (0,3.5) node[left] {} -- (5.5,3.5) node[right]{};

\draw[black,thin] (0,5 ) node[left] {} -- (5.5,5) node[right]{};

\draw[black,thin] (0.5,0) node[left] {} -- (0.5,5.5) node[right]{};

\draw[black,thin] (2,0) node[left] {} -- (2,5.5) node[right]{};

\draw[black,thin] (3.5,0) node[left] {} -- (3.5,5.5) node[right]{};

\draw[black,thin] (5,0) node[left] {} -- (5,5.5) node[right]{};


\fill [blue] (3.5,3.5) circle (2pt) node[] {};

\fill [blue] (5,3.5) circle (2pt) node[] {};

\fill [blue] (3.5,5) circle (2pt) node[below right] {$E_j$} node[above left] {$v_7$};

\fill [blue] (5,5) circle (2pt) node[] {};

\fill [blue] (3.5,4.25) circle (2pt) node[] {};

\fill [blue] (5,4.25) circle (2pt) node[] {};

\fill [blue] (4.25,4.25) circle (2pt) node[] {};

\fill [blue] (4.25, 3.5) circle (2pt) node[] {};

\fill [blue] (4.25,5) circle (2pt) node[] {};


\draw[thick,blue] (3.5,3.5) node[left] {} -- (3.5,5) node[right]{};

\draw[thick,blue] (3.5,3.5) node[left] {} -- (5,3.5) node[right]{};

\draw[thick,blue] (3.5,5) node[left] {} -- (5,5) node[right]{};

\draw[thick,blue] (5,3.5) node[left] {} -- (5,5) node[right]{};


\fill [red] (1.,1) circle (2pt) node[above right,black] {$E_{j,l}^{n+1, n}$};

\fill [red] (3,1) circle (2pt) node[] {};

\fill [red] (1,2.5) circle (2pt) node[below right] {$E_j^{n+1, n}$} node[above left] {$v_7^{n+1,n}$};

\fill [red] (2.5,2.5) circle (2pt) node[] {};

\draw[-latex,dashed](3.5,5)node[right,scale=1.0]{} to[out=240,in=70] (1,2.50) node[] {};

\draw (0.5+0.01,2-0.01) node[fill=white,below right] {$E_l$};

\draw [red,thick] (1,1)node[right,scale=1.0]{} to[out=20,in=150] (2,0.7) node[] {};


\draw [red,thick] (2,0.7)node[right,scale=1.0]{} to[out=330,in=240] (3,1) node[] {};

\draw [red,thick] (1,2.5)node[right,scale=1.0]{} to[out=310,in=90] (1.1,2) node[] {};

\draw [red,thick] (1.1,2)node[right,scale=1.0]{} to[out=270,in=80] (1,1) node[] {};

\draw [red,thick] (1,2.5)node[right,scale=1.0]{} to[out=10,in=180] (2.5,2.5) node[] {};

\draw [red,thick] (3,1)node[right,scale=1.0]{} to[out=80,in=280] (2.5,2.5) node[] {};

\fill [red] (2,0.7) circle (2pt) node[above right,black] {};

\fill [red] (2,1.7) circle (2pt) node[] {};

\fill [red] (1.9,2.5) circle (2pt) node[below right] {} node[above left] {};

\fill [red] (1.1,1.8) circle (2pt) node[] {};

\fill [red] (2.8,1.8) circle (2pt) node[] {};


\draw [red,thin] (1.4,2)node[left]{} -- (1.1,1.7) node[right] {};
\draw [red,thin] (1.6,2)node[left]{} -- (1.1,1.5) node[right] {};
\draw [red,thin] (1.8,2)node[left]{} -- (1.05,1.3) node[right] {};
\draw [red,thin] (2.0,2)node[left]{} -- (1.0,1.0) node[right] {};
\draw [red,thin] (2.0,1.8)node[left]{} -- (1.25,1.05) node[right] {};
\draw [red,thin] (2.0,1.6)node[left]{} -- (1.45,1.0) node[right] {};
\draw [red,thin] (2.0,1.35)node[left]{} -- (1.57,0.93) node[right] {};
\draw [red,thin] (2.0,1.15)node[left]{} -- (1.7,0.86) node[right] {};
\draw [red,thin] (2.0,0.95)node[left]{} -- (1.85,0.8) node[right] {};

\end{tikzpicture}
}
\subfigure[]{
\label{fig:schematic_2d_p2_b}
\begin{tikzpicture}[scale = 1.3]
\draw[black,thin] (0,0.5) node[left] {} -- (4,0.5) node[right]{};

\draw[black,thin] (0,2.) node[left] {} -- (4,2) node[right]{};

\draw[black,thin] (0,3.5) node[left] {} -- (4,3.5) node[right]{};

\draw[black,thin] (0.5,0) node[left] {} -- (0.5,4) node[right]{};
\draw[black,thin] (2,0) node[left] {} -- (2,4) node[right]{};
\draw[black,thin] (3.5,0) node[left] {} -- (3.5,4) node[right]{};

\fill [red] (1.,1) circle (2pt) node[above right,black] {};

\fill [red] (3,1) circle (2pt) node[] {};

\fill [red] (1,2.5) circle (2pt) node[below right] {} node[above left] {};

\fill [red] (2.5,2.5) circle (2pt) node[] {};

\draw (0.5+0.01,2-0.01) node[fill=white,below right] {};

\draw [red,thick] (1,1)node[right,scale=1.0]{} to[out=20,in=150] (2,0.7) node[] {};

\draw [red,thick] (2,0.7)node[right,scale=1.0]{} to[out=330,in=240] (3,1) node[] {};

\draw [red,thick] (1,2.5)node[right,scale=1.0]{} to[out=310,in=90] (1.1,2) node[] {};

\draw [red,thick] (1.1,2)node[right,scale=1.0]{} to[out=270,in=80] (1,1) node[] {};

\draw [red,thick] (1,2.5)node[right,scale=1.0]{} to[out=10,in=180] (2.5,2.5) node[] {};

\draw [red,thick] (3,1)node[right,scale=1.0]{} to[out=80,in=280] (2.5,2.5) node[] {};

\fill [red] (2,0.7) circle (2pt) node[above right,black] {};

\fill [red] (2,1.7) circle (2pt) node[] {};

\fill [red] (1.9,2.5) circle (2pt) node[below right] {} node[above left] {};

\fill [red] (1.1,1.8) circle (2pt) node[] {};

\fill [red] (2.8,1.8) circle (2pt) node[] {};

\draw[-latex,blue,thick](0,1)node[right,scale=1.0]{} to (4,1) node[below] {$\xi$};

\draw[-latex,blue,thick](2,0)node[right,scale=1.0]{} to (2,4) node[right] {$\eta$};

\draw[black, ultra thick] (2,0.7) node[below left =2pt] {$v_2^{n+1,n}$} parabola(1,1)node[above left,scale=1.0]{ $v_1^{n+1,n}$ };

\draw[-latex,black, ultra thick](2,0.7)node[above left,scale=1.0]{  } parabola (3,1) node[below =2pt] {$v_3^{n+1,n}$};

\end{tikzpicture}

}
\caption{Schematic illustration of the SLDG formulation with $P^2$ polynomial spaces in 2D. Left: upstream cell $E^{n+1, n}_j$ and subregion $E^{n+1, n}_{j,l}$. Right: QC quadrilateral upstream cells.
}
\label{fig:schematic_2d_p2}
\end{figure}
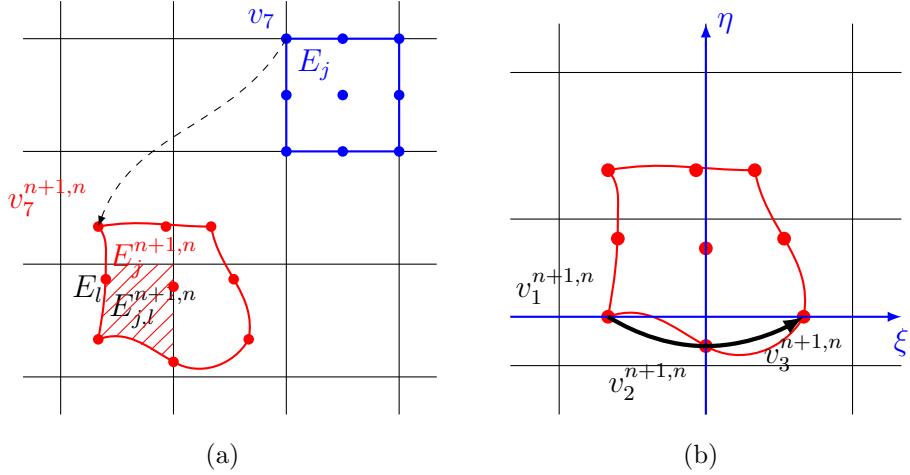

\noindent	
{\bf Step 2.2 Evaluation of Term II.}
Evaluation of Term II in eq.~\eqref{eq2:4:prop} can be realized by similar steps as in Step 1.2.
First, the LDG approximation to $u_{xx}$ in Step 1.2a can be directly generalized to evaluate $\Delta u$ in the 2D setting. $p = \Delta u$ is rewritten into a system of first order equations
\beq \label{eq2d_poisson}
	p = q_x+h_y, \quad
	q = u_x, \quad
	h = u_y.
\eeq
Weak formulations of \eqref{eq2d_poisson} can be discretized by an LDG method as in \eqref{eq2:12} to compute $p = \Delta u$ from five nearby elements. High order time discretizations of Term II can be fulfilled with DIRK methods on $\widetilde{E}_j(t)$ in the same fashion as Step 1.2b, by applying DIRK methods to the time differential form of the scheme \eqref{eq2:4:prop}
\beq \label{eq2:4:prop_diff}
\frac{d}{dt} \iint_{\widetilde{E}_j(t)}u \psi \ dx\ dy = \iint_{\widetilde{E}_j(t)} [\epsilon \Delta u+g] \psi \ dx \ dy.
\eeq
We summarize the flowchart below in {\bf Algorithm 1}, assuming the DIRK method has $s$-stages with the Butcher tableau
\begin{table}[htbp]
\centering
\begin{tabular}{c|c}
{\bf c}	&	A     \\
\hline
		& ${\bf b}^T$,
\end{tabular}
\end{table}\\
where $A = (a_{ ii , jj }) \in \mathbb{R}^{s \times s}$ , ${\bf b}\in \mathbb{R}^{s}$, and ${\bf c}\in \mathbb{R}^{s}$. In this paper, we assume that the DIRK method is stiffly accurate, i.e. the last row of the $A$ matrix is the same as the ${\bf b}^T$ vector.
Below we adopt a similar set of notations for the $E_j^{\tau_2, \tau}$ and $\psi^{\tau_2, \tau}(x, y)$ as in the 1D case, see \eqref{eq: 1d_notations}. In particular, $E_j^{\tau_2, \tau}$ is the upstream cell at time $\tau$, traced from the Eulerian cell $E_j$ at time $\tau_2$; and $\psi^{\tau_2, \tau}(x, y)$ is the approximation to the test function $\psi$ satisfying the adjoint problem \eqref{eq2:4:ad} with final value $ \psi(x, y, \tau_2) =\Psi(x, y) \in V^k_h$.

\bit

\item[(i)] In each DIRK stage $t^{( ii )}$, $1 \leq ii \leq s$, we have
\begin{equation*}
(u^{( ii )}, \Psi)_{E_j}	=	(u^n, \psi^{( ii ), n})_{E^{( ii ), n}_j}	+	\sum\limits_{ jj  = 1}^{ ii } a_{ ii , jj} \Dt  \left( \epsilon \Delta u^{( jj )}+g^{(jj)}, \psi^{( ii ), ( jj )} \right)_{E^{( ii ), ( jj )}_j}.
\end{equation*}
Rearranging the terms gives
\begin{multline} \label{eq:gen_int_u}
(u^{( ii )}, \Psi)_{E_j}	-	a_{ ii ,ii } \Dt \epsilon (\Delta u^{( ii )}, \Psi)_{E_j}		=
														(u^n, \psi^{(ii ), n})_{E^{(ii ), n}_j}	\\
														+	\sum\limits_{ jj = 1}^{ ii -1} a_{ ii , jj } \Dt \left( \epsilon \Delta u^{( jj )}+ g^{( jj )}, \psi^{(ii ), (jj)}\right)_{E^{( ii), ( jj )}_j}
														+	a_{ ii , ii } \Dt \left( g^{( ii )}, \Psi \right)_{E_j},
\end{multline}
which is a generalization from the scheme for 1D problems \eqref{eq3:5} and \eqref{eq3:11}.

\item[(ii)] Since the DIRK method we used is stiffly accurate, $u^{n+1}=u^{(s)}$.

\eit
\linespread{1.5}
\begin{algorithm} \small
\caption{The SLDG-LDG scheme coupled with DIRK methods}
	\begin{algorithmic}
	\For{$ ii = 1 \to s$}
			
		\For{$jj = 1 \to ii$}			
			
			Compute the RHS of eq. \eqref{eq:gen_int_u} by performing the following:
		
			\State 1. Find upstream cell $E^{(ii), (jj)}_j$ via tracing characteristic backwards by $(c_{ii}-c_{jj})\Dt$
			
			as in Step 2.1a.
						
			\State 2. Approximate test function $\psi^{(ii), (jj)}$ by a least-square strategy as in Step 2.1b.
						
			\State 3. Compute $(u^n, \psi^{(ii), n})_{E^{(ii), n}_j}$, $(\Delta u^{(jj)}, \psi^{(ii), (jj)})_{E^{(ii), (jj)}_j}$ and $\left(g^{(jj)}, \psi^{(ii), (jj)} \right)_{E^{(ii), (jj)}_j}$
			
			 by SLDG method as in Step 2.1c.
			
			\State 4. Evaluate $\left( g^{(ii)}, \Psi \right)_{E_j}$ by quadrature rules.
						
		\EndFor
		
		\State Compute $u^{(ii)}$ from eq.~\eqref{eq:gen_int_u}.
		
		\State Compute $\Delta u^{(ii)}$ with LDG method.
		
	\EndFor

	\State $u^{n+1} = u^{(s)}$.
	
	\end{algorithmic}
\label{algorithm}
\end{algorithm}


Finally, we present the data structure for setting up the 2D SLDG-LDG implementation, which is similar to that in \cite{cai2017high}. There are two main classes as specified below. Please see Figure \ref{fig:class_diagram} in which the lines in the figure indicate connections between classes.
\bit
\item[(i)] \textbf{Cell-E} representing Eulerian cells, e.g. $E_j$ in Figure \ref{fig:schematic_2d_p2_a}. Main variables are
\bit
\item \textbf{Node-E}: $\{v_q\}^4_{q=1}$ for $P^0$ and $P^1$; $\{v_q\}^9_{q=1}$ for $P^2$ as vertices of Eulerian cell.
\item \textbf{SLDG-LDG solution}.
\eit
\item[(ii)] \textbf{Cell-U} representing upstream cells, e.g. $E^{n+1, n}_j$ in Figure \ref{fig:schematic_2d_p2_a}. Main variables are
\bit
\item \textbf{Node-U}: $\{v^{(ii),(jj)}_q\}^4_{q=1}$ for $P^0$ and $P^1$;  $\{v^{(ii),(jj)} _q\}^9_{q=1}$ for $P^2$ as vertices of upstream cell.
\item \textbf{Test function}: e.g. $\psi^{n+1,n}(x,y)$ approximated by a least-square procedure and by following characteristics of the adjoint problem, see Step 2.1b.
\eit
\eit


\begin{figure}[h]
\centering
\begin{tikzpicture}[scale=0.9]

\tikzstyle{every node}=[font=\scriptsize]

\node (EulerianCell) [abstract, rectangle split, rectangle split parts=1]
at(-4.0,0.0)
	{
		\textbf{Cell-E}
	};
	
\node (SLDG-LDGsolution) [abstract, rectangle split, rectangle split parts=1]
at(-6.0,-1.5)
	{
		\textbf{SLDG-LDG solution}
	};
	
\node (NodeEulerian) [abstract, rectangle split, rectangle split parts=1]
at(-2.0,-1.5)
	{
		\textbf{Node-E}
	};
	
\node (UpstreamCell) [abstract, rectangle split, rectangle split parts=1]
at(4.0,0.0)
	{
		\textbf{Cell-U}
	};
	
\node (NodeUpstream) [abstract, rectangle split, rectangle split parts=1]
at(2.0,-1.5)
	{
		\textbf{Node-U}
	};

\node (TestfcnUp) [abstract, rectangle split, rectangle split parts=1]
at(6.0,-1.5)
	{
		\textbf{Test function}
	};


\draw[line](EulerianCell.south) -- ++(0,-0.3) -|(SLDG-LDGsolution.north);
\draw[line](EulerianCell.south) -- ++(0,-0.3) -|(NodeEulerian.north);

\draw[line](EulerianCell.east) -- (UpstreamCell.west);

\draw[line](UpstreamCell.south) -- ++(0,-0.3) -|(NodeUpstream.north);
\draw[line](UpstreamCell.south) -- ++(0,-0.3) -|(TestfcnUp.north);

\draw[line](NodeEulerian.east) -- (NodeUpstream.west);

\end{tikzpicture}


\linespread{1.8}

\begin{minipage}[t]{0.48\textwidth}\scriptsize
\bit
\item[(i)]
Cell-E: Eulerian Cell $E_j$. \\
Node-E: $\{v_q\}^4_{q=1}\ \text{for}\ P^0 \& P^1$ or $\{v_q\}^9_{q=1}\ \text{for}\ P^2$.\\
SLDG-LDG solution: $u^n \in  P^k(E_j)$.
\eit
\end{minipage}
\begin{minipage}[t]{0.48\textwidth}\scriptsize
\bit
\item[(ii)]
Cell-U: Upstream Cell $E^{( ii ), ( jj )}_j$.\\
Node-U: $\{v^{ (ii),(jj ) }_q\}^4_{q=1}\ \text{for}\ P^0 \& P^1$ or $\{v^{ (ii),(jj ) }_q\}^9_{q=1}\ \text{for}\ P^2$.\\
Test function: $\psi^{(ii), (jj)}$.\\
($ii, jj$ here refer to the indexes in the {\bf Algorithm \ref{algorithm}}).
\eit

\end{minipage}

\caption{Data structure of 2D SLDG-LDG schemes}

\label{fig:class_diagram}

\end{figure}


\subsection{Stability analysis}
\label{stab_anal}

We now briefly discuss the mass conservation and stability properties of the proposed SLDG-LDG schemes when coupled with first order backward Euler method.
Stability analysis of our scheme coupling with higher order time discretization will be pursued in the future.


\begin{prop} \label{prop_mass_con}
(Mass conservative). The SLDG-LDG method coupled with any DIRK time discretization methods for the linear convection-diffusion problems enjoy the mass conservation property, assuming the source term $g = 0$ in eq.~\eqref{eq3:1:1d} and periodic boundary condition.
$$
\int_{\Omega} u^{n+1} d \bx = \int_{\Omega} u^n d \bx.
$$
\end{prop}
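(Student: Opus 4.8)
The plan is to test the scheme with the constant function and sum over all elements, exploiting three facts: that the adjoint problem preserves constants, that the backward characteristic flow tiles the domain, and that the alternating-flux LDG operator is conservative. First I would set $\Psi \equiv 1$ on each element, which is admissible since constants lie in $P^k$. Because the adjoint solution $\psi$ is constant along characteristics and carries the final value $\psi(\cdot,\tau_2)\equiv 1$, we have $\psi\equiv 1$ on the whole space-time slab; moreover the polynomial reconstructions used in the scheme (interpolation in Step~1.1a, the least-squares fit in Step~2.1b) reproduce constants exactly, so every approximate test function $\psi^{(ii),(jj)}$ appearing in \eqref{eq:gen_int_u} (and in \eqref{eq3:4}--\eqref{eq3:13}) is identically $1$.

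Next I would dispatch the two transport terms. Summing $(u^{(ii)},1)_{E_j}$ over $j$ gives $\int_\Omega u^{(ii)}\,d\bx$ since the Eulerian cells tile $\Omega$. For the upstream term $(u^n,\psi^{(ii),n})_{E^{(ii),n}_j}=\iint_{E^{(ii),n}_j}u^n\,d\bx$ I would use that, for fixed stage indices, the correspondence sending each Eulerian cell to its upstream image is induced by the characteristic flow \eqref{eq: char_inter}, which is invertible by uniqueness of characteristics; under periodic boundary conditions the upstream cells $\{E^{(ii),n}_j\}_j$ therefore form a partition of $\Omega$ (consecutive cells share their traced-back boundaries, with no gaps or overlaps), so $\sum_j \iint_{E^{(ii),n}_j}u^n\,d\bx=\int_\Omega u^n\,d\bx$. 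With $\psi^{(ii),n}\equiv 1$ the SLDG sub-cell decomposition \eqref{eq3:2:1}/\eqref{eq2:sum_2d} reconstructs this integral exactly, since the quadrature is exact for the piecewise-polynomial integrand $u^n$.

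The crux, and the main obstacle, is the diffusion contribution $\sum_{jj} a_{ii,jj}\Dt\,\epsilon(\Delta u^{(jj)},\psi^{(ii),(jj)})_{E^{(ii),(jj)}_j}$, because each term is integrated over a \emph{dynamic upstream cell} rather than over the background mesh, so the usual telescoping argument cannot be applied directly. I would again set $\psi^{(ii),(jj)}\equiv 1$ and invoke the tiling property: for each fixed pair $(ii,jj)$ the cells $\{E^{(ii),(jj)}_j\}_j$ partition $\Omega$ (when $ii=jj$ this reduces to the background partition $\{E_j\}_j$), hence $\sum_j \iint_{E^{(ii),(jj)}_j} p^{(jj)}\,d\bx=\int_\Omega p^{(jj)}\,d\bx=\sum_l (p^{(jj)},1)_{E_l}$, where $p^{(jj)}$ is the LDG approximation of $\Delta u^{(jj)}$. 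Taking $v\equiv 1$ in the LDG identity \eqref{eq2:12a} (and its 2D counterpart built from \eqref{eq2d_poisson}) kills the volume term $(q,v_x)$ and leaves only the flux differences $\hat q_{\jR}-\hat q_{\jL}$, which telescope to zero when summed over $l$ under periodic boundary conditions. Thus every diffusion term vanishes after summation, and the source term vanishes by the hypothesis $g=0$.

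Finally I would assemble the pieces: summing \eqref{eq:gen_int_u} over $j$ yields $\int_\Omega u^{(ii)}\,d\bx=\int_\Omega u^n\,d\bx$ at every stage $ii$; since the DIRK method is stiffly accurate, $u^{n+1}=u^{(s)}$, which gives the claim, with the 1D statement following verbatim from \eqref{eq3:4}--\eqref{eq3:13}. I expect the only nontrivial ingredient to be the bijectivity and resulting tiling of the upstream cells, which is precisely what lets me transfer the LDG telescoping identity from the dynamic characteristic cells back onto the fixed background mesh; the remaining steps are linearity, exactness of the constant reconstruction, and the conservative structure of the alternating-flux LDG discretization.
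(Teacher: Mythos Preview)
Your proof is correct and follows exactly the approach the paper indicates: take $\Psi\equiv 1$ in the scheme identities and use the conservative (flux) form of the LDG discretization so that the diffusion contribution telescopes to zero under periodic boundary conditions. Your write-up is considerably more detailed than the paper's two-line sketch---in particular you spell out that the upstream cells tile $\Omega$ at each DIRK stage and that the constant reconstruction is exact---but the underlying strategy is identical.
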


\begin{proof}

This proposition can be easily proved by letting the test function $\Psi = 1$ in eq.~\eqref{eq3:3:2} and eq.~\eqref{eq2:4:prop} for 1D and 2D cases respectively, and then making use of the flux form of the LDG approximation of the diffusion term.

\end{proof}


\begin{prop}\label{prop_sta_be}
($L^2$ stability). Consider the proposed SLDG-LDG scheme coupled with the first-order backward Euler time discretization for 1D linear convection-diffusion equation $u_t + u_x = \epsilon u_{xx}$, $\epsilon>0$ and periodic boundary condition, then:
\beq \label{eq3:sta_be}
\Vert u^{n+1} \Vert  \leq \Vert u^n \Vert,
\eeq
where $\|\cdot\|$ denotes the standard $L^2$ norm over $\Omega$.
\end{prop}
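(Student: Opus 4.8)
The plan is to test the fully discrete backward Euler scheme \eqref{eq3:4} against $\Psi = u^{n+1}|_{I_j}$, which is a legitimate choice since $u^{n+1}\in V^k_h$ so its restriction to each $I_j$ lies in $P^k(I_j)$. With $a\equiv 1$, $g=0$ and this test function, \eqref{eq3:4} reads
\[
(u^{n+1},u^{n+1})_{I_j}-(u^n,\psi^{n+1,n})_{I^{n+1,n}_j}=\Dt\,\epsilon\,(u^{n+1}_{xx},u^{n+1})_{I_j},
\]
and I would sum this identity over $j=1,\dots,N$. The first term on the left sums to $\|u^{n+1}\|^2$ immediately. The strategy is then to show that the second (upstream) term sums to a transport integral that Cauchy--Schwarz controls by $\|u^n\|\,\|u^{n+1}\|$, while the summed right-hand side is nonpositive thanks to the LDG discretization.

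For the upstream term I would exploit that the velocity is constant, so the characteristics \eqref{eq: char_inter} are exact straight lines and backward tracing is the rigid translation $x\mapsto x-\Dt$. Consequently $I^{n+1,n}_j=I_j-\Dt$ and, since $\psi$ is constant along characteristics with final datum $\Psi=u^{n+1}$, one has $\psi^{n+1,n}(x)=u^{n+1}(x+\Dt)$ exactly. Because $u^{n+1}|_{I_j}$ is a polynomial of degree $k$ and the translation is exact, the degree-$k$ interpolation in Step 1.1a reconstructs $\psi^{n+1,n}$ with no error, so the SLDG evaluation of Term I is exact. Under periodic boundary conditions the upstream cells $\{I^{n+1,n}_j\}$ tile $\Omega$, and the piecewise-defined reconstructions glue into the single transported function $u^{n+1}(\cdot+\Dt)$; hence
\[
\sum_j (u^n,\psi^{n+1,n})_{I^{n+1,n}_j}=\int_\Omega u^n(x)\,u^{n+1}(x+\Dt)\,dx.
\]
By the Cauchy--Schwarz inequality together with the translation invariance of the $L^2$ norm on the periodic domain, $\|u^{n+1}(\cdot+\Dt)\|=\|u^{n+1}\|$, so this term is bounded above by $\|u^n\|\,\|u^{n+1}\|$.

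For the right-hand side I would invoke the standard coercivity of the LDG Laplacian. Writing $p=u^{n+1}_{xx}$ and $q=u^{n+1}_x$ as in \eqref{eq2:11}--\eqref{eq2:12}, I would take $v=u^{n+1}$ in \eqref{eq2:12a} and $w=q$ in \eqref{eq2:12b}, add the two identities, sum over $j$, and combine the volume terms by integration by parts. With the alternating fluxes \eqref{eq2:13} all interface contributions telescope and cancel under periodicity, yielding
\[
\sum_j (u^{n+1}_{xx},u^{n+1})_{I_j}=-\sum_j\|q\|^2_{I_j}\le 0,
\]
so the summed right-hand side equals $-\Dt\,\epsilon\sum_j\|q\|^2_{I_j}\le 0$.

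Combining the three pieces gives $\|u^{n+1}\|^2\le \|u^n\|\,\|u^{n+1}\|$, from which \eqref{eq3:sta_be} follows (trivially if $\|u^{n+1}\|=0$). I expect the main obstacle to be the rigorous justification of the upstream step: one must argue that the fully discrete Term I, with its interpolation of $\psi^{n+1,n}$ and its decomposition over sub-intervals where $u^n$ is discontinuous, reproduces the exact transport integral rather than an approximation of it. This is precisely where the constant-velocity assumption is essential, since it makes the characteristic map an exact translation and the polynomial reconstruction error-free; for variable $a$ this identification would fail and the argument would have to be modified.
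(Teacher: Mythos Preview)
Your proposal is correct and follows essentially the same route as the paper: test with $\Psi=u^{n+1}$, identify $\psi^{n+1,n}(x)=u^{n+1}(x+\Dt)$ via the exact translation characteristics, use the LDG energy identity with alternating fluxes to obtain $\sum_j(u^{n+1}_{xx},u^{n+1})_{I_j}=-\|q^{n+1}\|^2\le 0$, and conclude by Cauchy--Schwarz plus translation invariance of the $L^2$ norm. Your additional remark that the degree-$k$ interpolation of $\psi^{n+1,n}$ is exact (because a translated degree-$k$ polynomial is still degree $k$) is a useful justification that the paper leaves implicit.
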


\begin{proof}
The SLDG-LDG scheme for the 1D linear problem writes
\begin{subequations}
	\label{eq:stab_ldg}
	\begin{align}
	(u^{n+1},\Psi)_{I_j}-(u^n,\psi^{n+1, n})_{I^{n+1, n}_j}			&= \epsilon \cdot \Dt \left( ( \hat{q}^{n+1}\ \Psi ) \vert^{\jR}_{\jL} - (q^{n+1}, \Psi_x)_{I_j} \right),
	\label{stab_ldg1}	\\
	(q^{n+1},\varphi)_{I_j}					 		&= 	( \hat{u}^{n+1}\ \varphi ) \vert^{\jR}_{\jL} - (u^{n+1},\varphi_x)_{I_j} \label{stab_ldg2}
	\end{align}
\end{subequations}
where $\Psi, \varphi \in V^k_h$. As a standard technique for proving the stability,  we take the test function $\Psi= u^{n+1}$ and $\varphi = q^{n+1}$ on $I_j$ in eq.~\eqref{stab_ldg1} and eq.~\eqref{stab_ldg2}, respectively. According to \eqref{eq3:3}, we have
\[
\psi^{n+1, n}= u^{n+1}(x+\Dt)\doteq\delta_{\Dt}u^{n+1}.
\]
This, together with the weak formulations in eq.~\eqref{eq2:12}, yields
\begin{subequations}
\label{eq4:2}
	\begin{align}
	(u^{n+1},u^{n+1})_{I_j}-(u^n,\delta_{\Dt}u^{n+1})_{I^{n+1, n}_j}		
														&=	\epsilon \cdot \Dt \left( ( \hat{q}^{n+1}\ u^{n+1} ) \vert^{\jR}_{\jL} - (q^{n+1},u^{n+1}_x)_{I_j} \right),	\label{eq4:2a}\\
	(q^{n+1},q^{n+1})_{I_j} 									
														&=	( \hat{u}^{n+1}\ q^{n+1} ) \vert^{\jR}_{\jL} - (u^{n+1},q^{n+1}_x)_{I_j}.	\label{eq4:2b}
	\end{align}
\end{subequations}
$\eqref{eq4:2a}+\epsilon \cdot \Dt \cdot \eqref{eq4:2b}$ and summing up over $j$ give us

\begin{multline} \small
\label{eq4:3}
\Vert u^{n+1} \Vert ^2-\int_{x_a}^{x_b} u^n \cdot \delta_{\Dt}u^{n+1} dx+\epsilon \Dt \Vert q^{n+1}\Vert ^2 	=				\\
												\epsilon \cdot \Dt  \cdot \sum\limits_{j} \left\{ ( \hat{q}^{n+1}\ u^{n+1} ) \vert^{\jR}_{\jL} + ( \hat{u}^{n+1}\ q^{n+1} ) \vert^{\jR}_{\jL}
											-	\left[(q^{n+1},u^{n+1}_x)_{I_j}+(u^{n+1},q^{n+1}_x)_{I_j} \right]	\right\}		\\
											=	\epsilon \cdot \Dt \cdot \sum\limits_{j} \left\{ ( \hat{q}^{n+1}\ u^{n+1} ) \vert^{\jR}_{\jL} + ( \hat{u}^{n+1}\ q^{n+1} ) \vert^{\jR}_{\jL}
											-	\int_{I_j} (q^{n+1} u^{n+1})_x\ dx	  \right\}		\\
											=	\epsilon \cdot \Dt \cdot \sum\limits_{j} \left\{ ( \hat{q}^{n+1}\ u^{n+1} ) \vert^{\jR}_{\jL} + ( \hat{u}^{n+1}\ q^{n+1} ) \vert^{\jR}_{\jL}
											-	( q^{n+1} u^{n+1} ) \vert^{\jR}_{\jL}	\right\}	=	0,
\end{multline}where  the cancellation is due to the alternating fluxes used, see \eqref{eq2:13}, and the periodicity.

Noting that $\epsilon \Dt \Vert q^{n+1}\Vert ^2\geq 0$ on the LHS of \eqref{eq4:3}, we have
\beq \label{eq4:4}
\Vert u^{n+1} \Vert ^2 \leq \int_{x_a}^{x_b} u^n \cdot \delta_{\Dt}u^{n+1} dx.
\eeq
Then, applying the Cauchy-Schwarz inequality to \eqref{eq4:4} together with the identity $\|\delta_{\Dt}u^{n+1}\|=\|u^{n+1}\|$ yields
\beq \label{eq4:5}
\Vert u^{n+1} \Vert  \leq \Vert u^n \Vert.
\eeq
This completes the proof.
\end{proof}

\section{Numerical tests}
\label{sec5}

\setcounter{equation}{0}

In this section, we present the convergence study in terms of spatial and temporal orders of the proposed SLDG-LDG methods for a collection of 1D and 2D benchmark linear convection-diffusion equations. Mass conservation in Proposition~\ref{prop_mass_con} is also numerically verified. We assume uniform partition of the computational domain $\Omega$. In principle, the addressed schemes can be extended to general nonuniform meshes.
We let $\Dt = \f{\text{CFL}}{\f{\max\vert a(x,t)\vert}{\Dx}}$ and $\Dt = \f{\text{CFL}}{\f{\max\vert a(x,y,t)\vert}{\Dx}+\f{\max\vert b(x,y,t)\vert}{\Dy}}$ for 1D and 2D tests respectively, in which the CFL number is to be specified.
For the test of spatial accuracy, we choose DIRK4 with Butcher tableau specified in Table \ref{tab_dirk4} in order minimize the time discretization error.  Likewise, for temporal accuracy tests, we use SLDG-LDG with piecewise $P^2$ polynomial space unless otherwise specified.  


\subsection{One-dimensional tests}


\begin{exa} \label{exa_1d_const}

( 1D linear convection-diffusion equation.)  
Consider the following 1D convection-diffusion equation
\beq \label{eq_exa1}
u_t + u_x 	=	 \epsilon u_{xx}, \quad x\in[0, 2\pi]
\eeq
with exact solution $u = \sin(x-t)\exp(-\epsilon t)$. Table \ref{spatial_1d_const} provides $L^1$, $L^2$, $L^{\infty}$ errors to verify the spatial performance of the SLDG-LDG method. One can observe the method is of $k+1$ order when $V^k_h$  is used, as expected. To demonstrate the temporal orders of accuracy of the employed DIRK methods, we fix $N = 500$ and let the CFL numbers vary from $1.1$ to $12.1$ (note the extra large values of CFL numbers). Figure \ref{fig:1D_con_temp} shows the $L^1$ error of the proposed scheme coupled with DIRK2, DIRK3 and DIRK4 and expected orders can be observed as compared with reference slopes. Note that for this problem there is no error incurred in time for the convection part, since the characteristics are tracked exactly in the proposed SL setting.

\linespread{1.0}

\begin{table} [htbp]\scriptsize
\centering
\caption{Spatial order of accuracy for Example \ref{exa_1d_const} with $CFL = 1.0, \epsilon = 1$ at $T = 1.0$.}
\label{spatial_1d_const}

\bigskip

\begin{tabular}{|c | cc |cc | cc |}
\hline
\multicolumn{7}{|c|}{$k = 0$}\\
\hline
 mesh &{$L^1$ error} & Order    &{$L^2$ error} & Order &{$L^{\infty}$ error} & Order\\
\hline
    10 &     3.79E-02 &		    &     4.78E-02 &		&     1.08E-01 & 		   \\
    20 &     1.92E-02 &     0.98 &     2.40E-02 &     0.99 	&     5.45E-02 &     0.99 \\
    40 &     9.41E-03 &     1.03 &     1.18E-02 &     1.02 	&     2.70E-02 &     1.02 \\
    80 &     4.70E-03 &     1.00 &     5.90E-03 &     1.00 	&     1.35E-02 &     1.00 \\
   160 &     2.35E-03 &     1.00 &     2.95E-03 &     1.00 &     6.74E-03 &     1.00 \\
\hline
\multicolumn{7}{|c|}{$k = 1$}\\
\hline
 mesh &{$L^1$ error} & Order    &{$L^2$ error} & Order &{$L^{\infty}$ error} & Order\\
\hline
    10 &     4.60E-03 &		    &     5.57E-03 &		&     1.15E-02 & 		   \\
    20 &     1.21E-03 &     1.92 &     1.50E-03 &     1.90 	&     4.27E-03 &     1.43 \\
    40 &     2.88E-04 &     2.07 &     3.70E-04 &     2.01 	&     1.17E-03 &     1.87 \\
    80 &     7.01E-05 &     2.04 &     9.28E-05 &     2.00 	&     3.04E-04 &     1.94 \\
   160 &    1.78E-05 &     1.98 &     2.39E-05 &     1.96  &     7.95E-05 &     1.94 \\
\hline
\multicolumn{7}{|c|}{$k = 2$}\\
\hline
 mesh &{$L^1$ error} & Order    &{$L^2$ error} & Order &{$L^{\infty}$ error} & Order\\
\hline
    10 &     2.18E-04 &		    &     3.19E-04 &		&     1.08E-03 & 		   \\
    20 &     2.57E-04 &     3.09 &     3.92E-05 &     3.03 	&     1.36E-04 &     2.99 \\
    40 &     3.32E-06 &     2.95 &     5.05E-06 &     2.96 	&     1.77E-05 &     2.95 \\
    80 &     4.00E-07 &     3.05 &     6.02E-07 &     3.07 	&     2.05E-06 &     3.11 \\
   160 &    5.10E-08 &     2.97 &     7.73E-08 &     2.96  &     2.68E-07 &     2.94 \\
\hline
\end{tabular}

\end{table}


\begin{figure}
\centering
	\includegraphics[width = 0.5\textwidth]{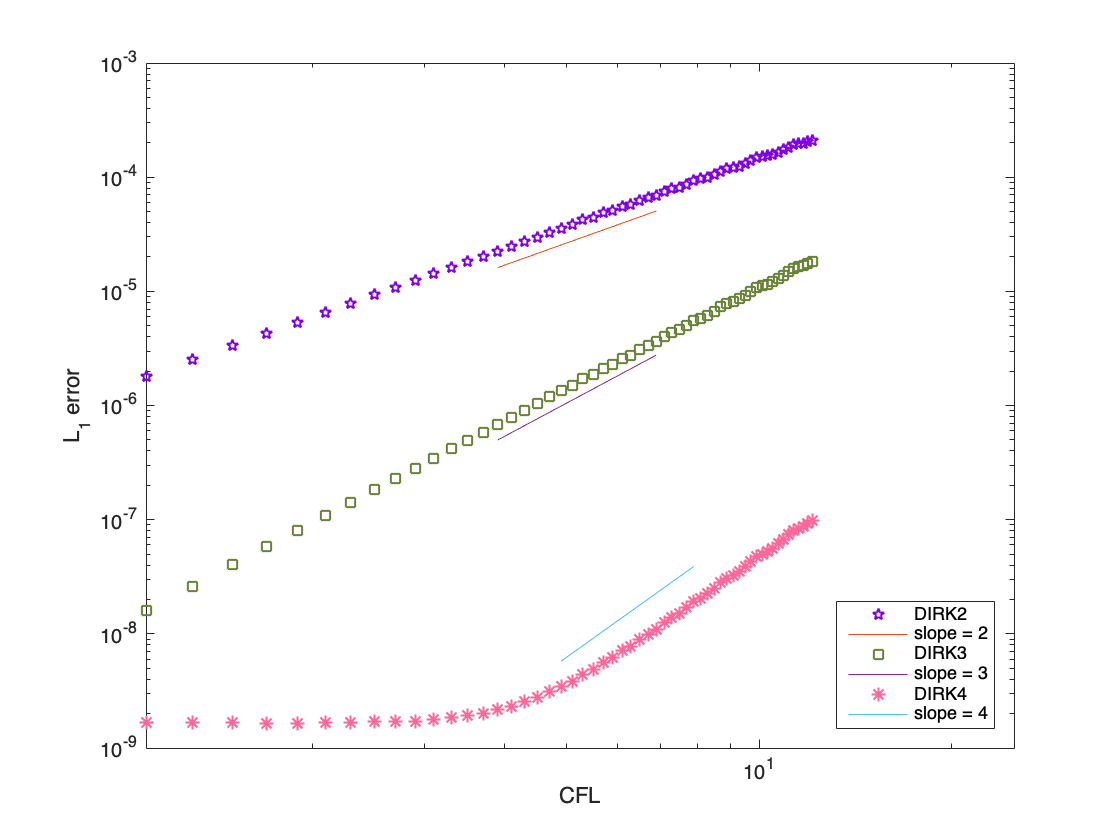}
\caption{ $L^1$ error with varying CFL numbers of Example \ref{exa_1d_const}. SLDG-LDG with $P^2$ polynomial space. $N=500$ for spatial discretization.}
\label{fig:1D_con_temp}
\end{figure}

\end{exa}


\begin{exa} \label{exa_1d_var}

( 1D equation with variable coefficient.)  
Consider
\beq \label{eq_exa2.1}
u_t + (\sin(x)u)_x 		=	 \epsilon u_{xx}+g, \quad x\in[0, 2\pi]
\eeq
with $u = \sin(x)\exp(-\epsilon t)$ and $g = \sin(2 x)\exp(-\epsilon t)$. Expected spatial orders of accuracy are observed in Table \ref{spatial_1d_var}. With fixed mesh $N = 500$ and CFL varying from $0.3$ to $12.1$, high order temporal convergences are observed in Figure \ref{fig:1D_var_temp}. 


\linespread{1.0}

\begin{table} [htbp]\scriptsize
\centering
\caption{Spatial order of accuracy for Example \ref{exa_1d_var} with $CFL = 1.0, \epsilon = 1$ at $T = 1.0$.}
\label{spatial_1d_var}

\bigskip

\begin{tabular}{|c | cc |cc | cc |}
\hline
\multicolumn{7}{|c|}{$k = 0$}\\
\hline
 mesh &{$L^1$ error} & Order    &{$L^2$ error} & Order &{$L^{\infty}$ error} & Order\\
\hline
    10 &     4.20E-02 &		    &     4.96E-02 &		&     1.11E-01 & 		   \\
    20 &     1.97E-02 &     1.09 &     2.42E-02 &     1.03 	&     5.41E-02 &     1.05 \\
    40 &     9.96E-03 &     0.98 &     1.22E-02 &     0.99 	&     2.71E-02 &     1.00 \\
    80 &     4.97E-03 &     1.00 &     6.11E-03 &     0.99 	&     1.35E-02 &     1.00 \\
   160 &     2.50E-03 &     1.00 &     3.07E-03 &     1.00 &     6.80E-03 &     0.99 \\
\hline
\multicolumn{7}{|c|}{$k = 1$}\\
\hline
 mesh &{$L^1$ error} & Order    &{$L^2$ error} & Order &{$L^{\infty}$ error} & Order\\
\hline
    10 &     6.24E-03 &		    &     8.42E-03 &		&     3.11E-02 & 		   \\
    20 &     1.33E-03 &     2.23 &     1.78E-03 &     2.25 	&     6.53E-03 &     2.25 \\
    40 &     3.06E-04 &     2.12 &     3.20E-04 &     2.08 	&     1.57E-03 &     2.05 \\
    80 &     7.39E-05 &     2.05 &     1.04E-04 &     2.02 	&     3.91E-04 &     2.01 \\
   160 &    1.85E-05 &     2.00 &     2.62E-05 &     1.98  &     9.42E-05 &     2.05 \\
\hline
\multicolumn{7}{|c|}{$k = 2$}\\
\hline
 mesh &{$L^1$ error} & Order    &{$L^2$ error} & Order &{$L^{\infty}$ error} & Order\\
\hline
    10 &     4.29E-04 &		    &     5.38E-04 &		&     1.69E-03 & 		   \\
    20 &     9.53E-05 &     2.17 &     1.09E-04 &     2.31 	&     2.60E-04 &     2.70 \\
    40 &     8.16E-06 &     3.55 &     9.63E-06 &     3.49 	&     2.70E-05 &     3.27 \\
    80 &     7.72E-07 &     3.40 &     9.37E-07 &     3.36 	&     3.03E-06 &     3.15 \\
   160 &    7.57E-08 &     3.35 &     9.60E-08 &     3.29  &     3.39E-07 &     3.16 \\
\hline
\end{tabular}

\end{table}


\begin{figure}
\centering
	\includegraphics[width = 0.5\textwidth]{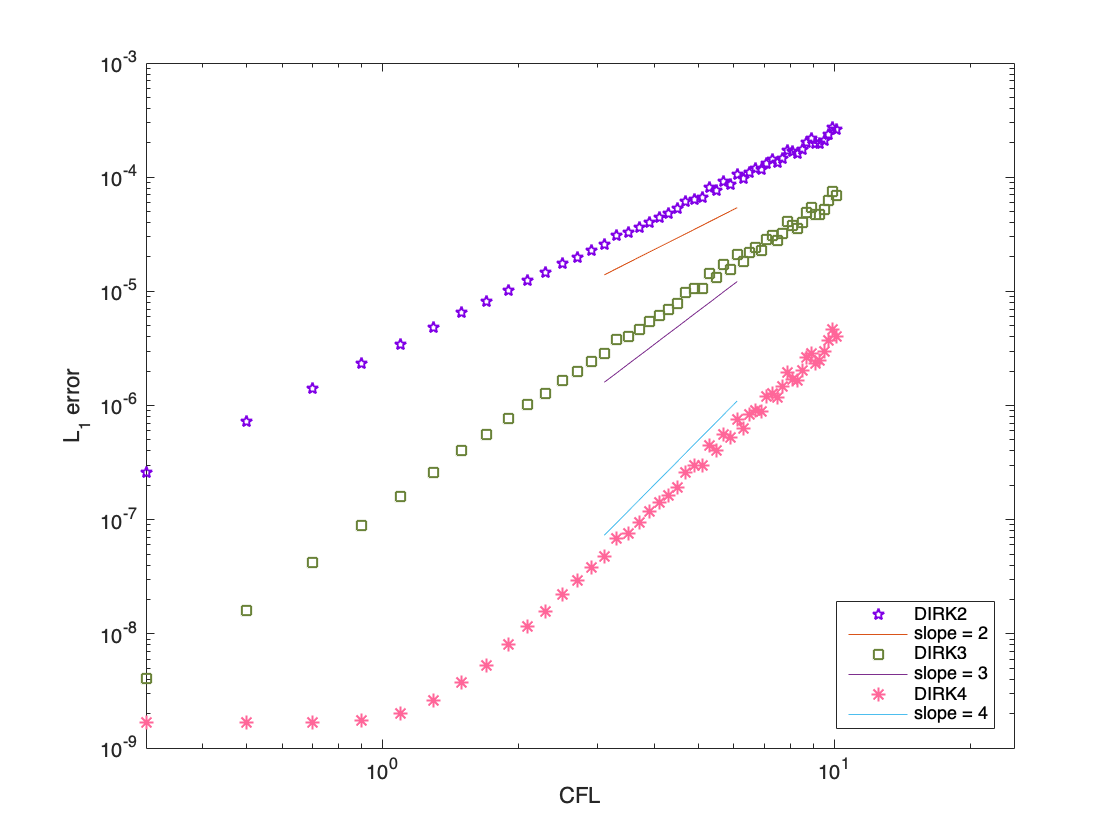}
\caption{$L^1$ error with varying CFL numbers of Example \ref{exa_1d_var}. SLDG-LDG with $P^2$ polynomial space. $N=500$ for spatial discretization.
}
\label{fig:1D_var_temp}
\end{figure}

\end{exa}


\subsection{Two-dimensional tests}


\begin{exa}\label{exa_2d_const}

(2D linear convection-diffusion equation.) Consider
\beq
u_t + u_x+u_y 	=	 \epsilon \Delta u, \quad x, y \in[0, 2\pi]
\label{eq_exa3}
\eeq
with exact solution $u = \sin(x+y-2t)\exp(-2\epsilon t)$.

We observe $k+1$ spatial orders of accuracy from Table \ref{spatial_2d_const} when $V_h^k$ is employed. The temporal convergence study is summarized in Figure \ref{fig:2D_con_temp} with fixed a mesh of $J = 200^2$ cells. The observation is similar to that in the 1D case (Example 3.1). 
To verify the mass conservation property claimed in Proposition 2.6 for Example 3.3, we plot $\vert \iint_{\Omega} u^n d{\bf x} \vert$ with $V^0_h$ computed using different thresholds $10^{-10}$, $10^{-12}$ and $10^{-14}$ for the GMRES iterative method in Figure 9. We see that the variation of $\vert \iint_{\Omega} u^n d {\bf x} \vert$ in time depends on the choice of threshold and it gets closer to the machine precision when a smaller threshold in GMRES is used.


\linespread{1.0}

\begin{table} [htbp]\scriptsize
\centering
\caption{Spatial order of accuracy for Example \ref{exa_2d_const} with $CFL = 1.0, \epsilon = 1$ at $T = 1.0$.}
\label{spatial_2d_const}
\bigskip

\begin{tabular}{|c | cc |cc | cc |}
\hline
\multicolumn{7}{|c|}{$k = 0$; Quadrilateral}\\
\hline
 mesh &{$L^1$ error} & Order    &{$L^2$ error} & Order &{$L^{\infty}$ error} & Order\\
\hline
$20^2$	&     4.64E-02 	& 		&     5.15E-02 & 		&     7.67E-02 & 			\\
$60^2$	&     1.99E-02 	&     0.77 &     2.21E-02 &     0.77 &     3.30E-02 &     0.77 \\
$100^2$ &     1.26E-02 	&     0.89 &     1.40E-02 &     0.89 &     2.09E-02 &     0.89 \\
$140^2$ &     9.23E-03 	&     0.93 &     1.03E-02 &     0.93 &     1.53E-02 &     0.93 \\
$180^2$ &     7.27E-03 	&     0.95 &     8.08E-03 &     0.95 &     1.20E-02 &     0.95 \\
\hline
\multicolumn{7}{|c|}{$k = 1$; Quadrilateral}\\
\hline
 mesh &{$L^1$ error} & Order    &{$L^2$ error} & Order &{$L^{\infty}$ error}& Order\\
\hline
$20^2$ 	&     1.10E-03 & 		&     1.35E-03 & 		&     5.23E-03 & 			\\
$60^2$ 	&     9.59E-05 &     2.22 &     1.28E-04 &     2.14 &     6.94E-04 &     1.84 \\
$100^2$ &     3.28E-05 &     2.10 &     4.52E-05 &     2.04 &     2.57E-04 &     1.95 \\
$140^2$	&     1.65E-05 &     2.05 &     2.31E-05 &     2.00 &     1.33E-04 &     1.95 \\
$180^2$ &     9.87E-06 &     2.04 &     1.40E-05 &     2.00 &     8.14E-05 &     1.96 \\
\hline
\multicolumn{7}{|c|}{ $k = 2$; Quadrilateral}\\
\hline
 mesh &{$L^1$ error} & Order    &{$L^2$ error} & Order &{$L^{\infty}$ error}& Order\\
\hline
$20^2$ 	&     4.14E-05 & 		&     6.06E-05 & 		&     4.82E-04 & 			\\
$60^2$  	&     1.59E-06 &     2.96 &     2.35E-06 &     2.96 &     1.88E-05 &     2.95 \\
$100^2$ &     3.45E-07 &     3.00 &     5.09E-07 &     2.99 &     4.08E-06 &     2.99 \\
$140^2$	&     1.26E-07 &     2.99 &     1.86E-07 &     2.99 &     1.49E-06 &     3.00 \\
$180^2$	&     5.96E-08 &     2.98 &     8.78E-08 &     2.99 &     7.03E-07 &     2.99 \\
\hline
\end{tabular}

\end{table}


\begin{figure}
\centering
	\includegraphics[width = 0.5\textwidth]{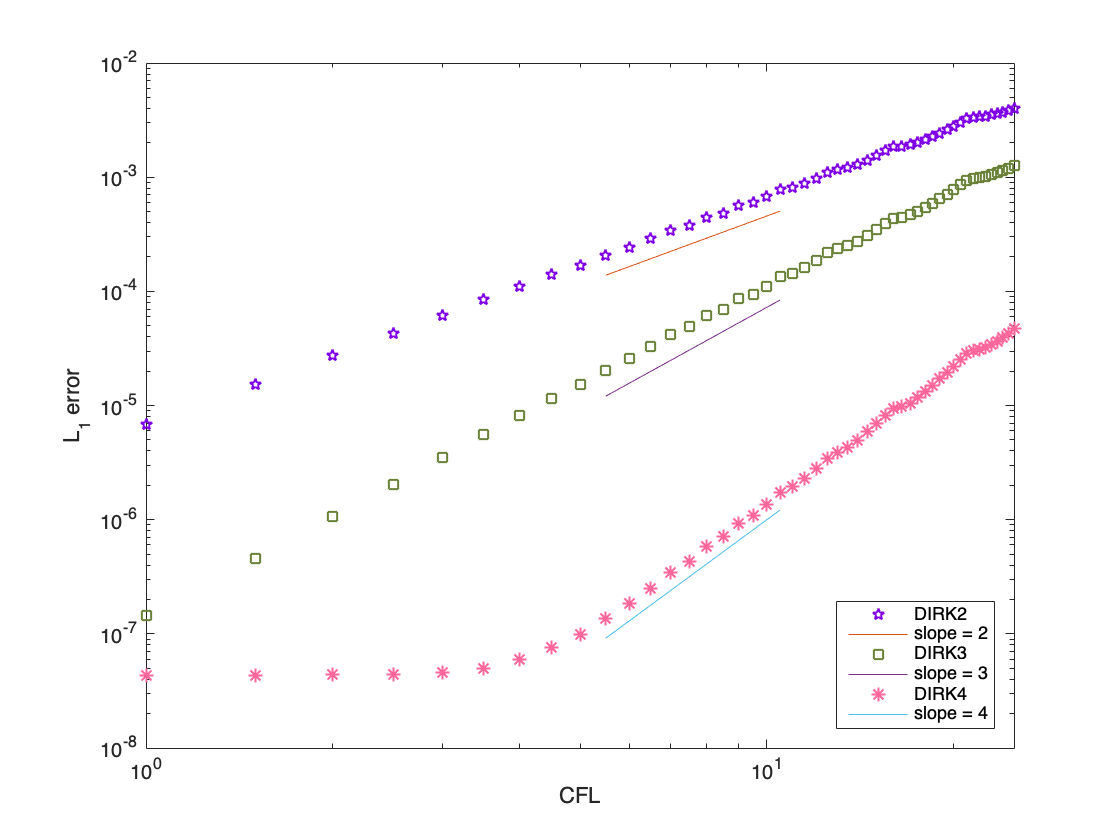}
\caption{$L_1$ error with varying CFL number of Example~\ref{exa_2d_const}. From top to bottom: DIRK2, DIRK3 and DRIK4.}
\label{fig:2D_con_temp}
\end{figure}

\begin{figure}[htbp]
\centering
    \includegraphics[width=0.6\textwidth]{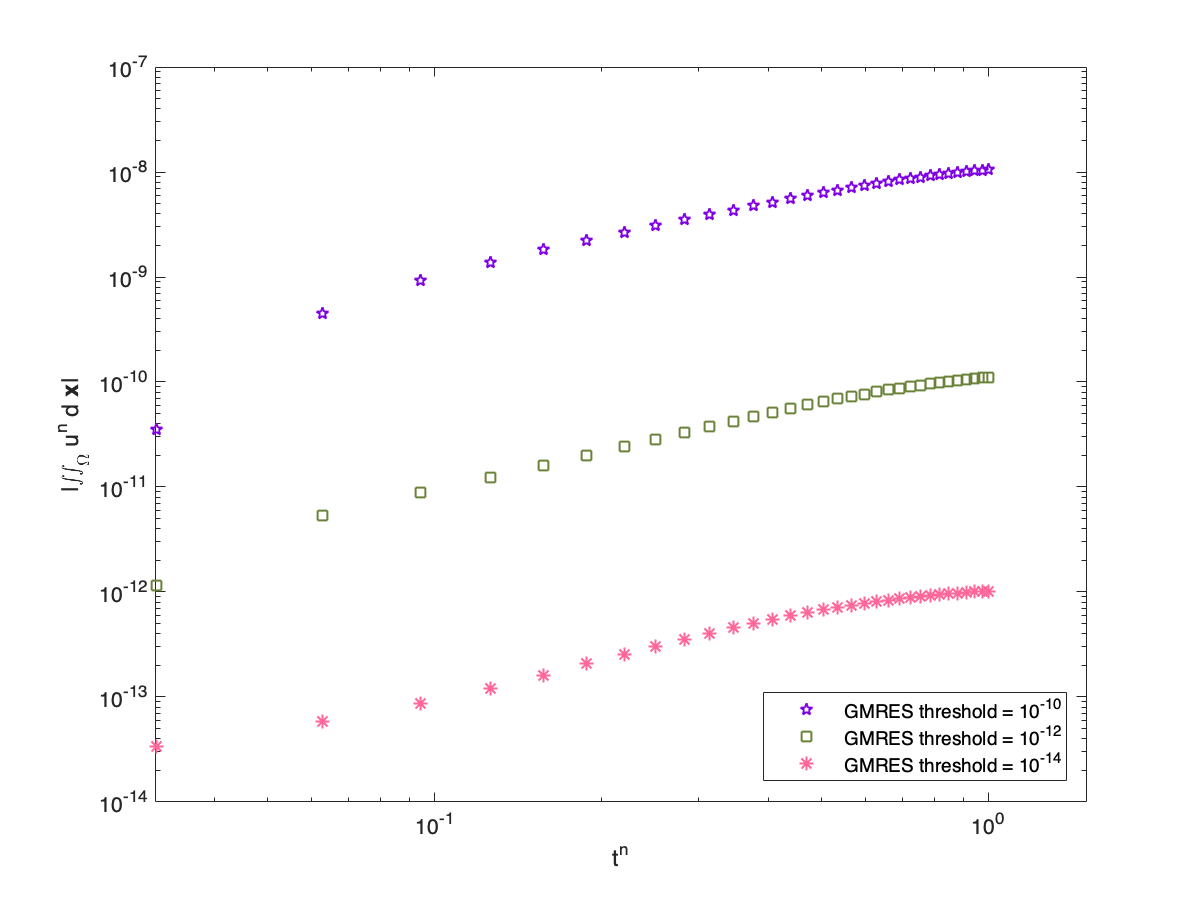}
\caption{$\vert \iint_{\Omega} u^n \ d\bf{x} \vert$ at different $t^n$ for Example~\ref{exa_2d_const} with $V^0_h$ space. From top to bottom: GMRES iterative method with thresholds $10^{-10}$, $10^{-12}$ and $10^{-14}$.}
\label{fig:2D_con_conser}
\end{figure}

\end{exa}


\begin{exa} \label{exa_2d_rigid}

(Rigid body rotation)  Consider
\beq \label{eq_exa4.1}
u_t - (yu)_x+(xu)_y	 =	 \epsilon \Delta u+g, \quad x , y \in[-2\pi, 2\pi].
\eeq
In order to test the spatial order of accuracy, we choose $u = \exp[ -(x^2+3y^2+2\epsilon t) ]$ and $g = [6\epsilon-4xy-4\epsilon\left(x^2+9y^2\right)] \exp[ -(x^2+3y^2+2\epsilon t) ]$. The results are summarized in Table \ref{spatial_2d_rigid} for $\epsilon = 1.0, CFL = 10.0$. The choice of the large CFL number once again supports our claim that the proposed methods are free of the stringent CFL restriction, leading to computational savings. 
Figure~\ref{fig:2D_rigid_temp} displays the temporal convergence study where we use two different ways to compute the numerical error: test1 is done by using the exact solution on mesh $J=200^2$; test2 uses the reference solution when $CFL = 0.01$ on mesh $J = 150^2$ and the final time $T = 0.1$. In Figure~\ref{fig:2D_rigid_temp}, when test1 is used, expected orders for DIRK2 and DIRK3 are observed, while the numerical convergence order is not clear for DIRK4. In order to reduce the interference of the spatial error, we carry out test2 and observe the expected order of temporal accuracy.

We numerically solve eq.~\eqref{eq_exa4.1} with an initial condition plotted in Figure \ref{fig:disk_cone_hump} which includes a slotted disk, a cone as well as a smooth hump. 
$\epsilon = 0.01,\, g = 0$ and mesh size $J = 200^2$ are chosen here.
The numerical solutions and the corresponding contour plots obtained by the proposed SLDG-LDG schemes with $V^k_h$ space ($k = 1, 2$) after $T = 1.0$ are plotted in Figure \ref{fig:rigid_disk_cone_hump}. DIRK4 time discretization method is applied.

To better compare the performances of the schemes with $V^1_h$ and $V^2_h$ spaces for the DG and LDG discretization, we plot the 1D cuts of the numerical solutions at $X = -1.0$ and $Y = 1.0$ with mesh size $J = 50^2$ along with the one computed on a refined mesh with $J = 200^2$ as in Figure \ref{fig:1D_cuts_rigid} at $T = 1.0$. 


\linespread{1.0}

\begin{table} [htbp]\scriptsize
\centering
\caption{Spatial order of accuracy for Example \ref{exa_2d_rigid} with CFL = 10.0, $\epsilon = 1.0$ at $T = 1.0$.}
\label{spatial_2d_rigid}
\bigskip

\begin{tabular}{|c | cc |cc | cc |}
\hline
\multicolumn{7}{|c|}{$k = 0$; Quadrilateral}\\
\hline
 mesh &{$L^1$ error} & Order    &{$L^2$ error} & Order &{$L^{\infty}$ error} & Order\\
\hline
$20^2$  &     1.97E-03 &		 &     6.53E-03 & 		&     9.39E-02 & 			\\
$60^2$  &     8.86E-04 &     0.73 &     3.08E-03 &     0.68 &     4.62E-02 &     0.65 \\
$100^2$ &     5.73E-04 &     0.85 &     2.02E-03 &     0.82 &     2.99E-02 &     0.85 \\
$140^2$ &     4.23E-04 &     0.90 &     1.51E-03 &     0.88 &     2.23E-02 &     0.87 \\
$180^2$ &     3.36E-04 &     0.92 &     1.20E-03 &     0.91 &     1.78E-02 &     0.90 \\
\hline
\multicolumn{7}{|c|}{$k = 1$; Quadrilateral}\\
\hline
 mesh &{$L^1$ error} & Order    &{$L^2$ error} & Order &{$L^{\infty}$ error}& Order\\
\hline
$20^2$ 	&     2.76E-04 & 		&     1.30E-03 & 		&     2.81E-02 &			 \\
$60^2$	 &     2.68E-05 &     2.12 &     1.58E-04 &     1.92 &     5.75E-03 &     1.45 \\
$100^2$	&     9.33E-06 &     2.07 &     5.72E-05 &     1.99 &     2.25E-03 &     1.83 \\
$140^2$  &     4.70E-06 &     2.04 &     2.94E-05 &     1.98 &     1.19E-03 &     1.91 \\
$180^2$	 &     2.82E-06 &     2.03 &     1.78E-05 &     1.98 &     7.29E-04 &     1.94 \\
\hline
\multicolumn{7}{|c|}{$k = 2$; Quadrilateral}\\
\hline
 mesh &{$L^1$ error} & Order    &{$L^2$ error} & Order &{$L^{\infty}$ error}& Order\\
\hline
$20^2$ 	&     7.11E-05 & 		&     3.51E-04 & 		&     1.32E-02 & 			\\
$60^2$ 	&     1.92E-06 &     3.29 &     1.18E-05 &     3.08 &     5.31E-04 &     2.92 \\
$100^2$ &     4.04E-07 &     3.05 &     2.57E-06 &     2.99 &     1.14E-04 &     3.01 \\
$140^2$ &     1.46E-07 &     3.03 &     9.39E-07 &     3.00 &     4.24E-05 &     2.94 \\
$180^2$ &     6.82E-08 &     3.02 &     4.41E-07 &     3.00 &     2.00E-05 &     2.99 \\
\hline
\end{tabular}

\end{table}


\begin{figure}
\begin{minipage}[b]{0.48\textwidth}
\centering
	\includegraphics[width = 1.0\textwidth]{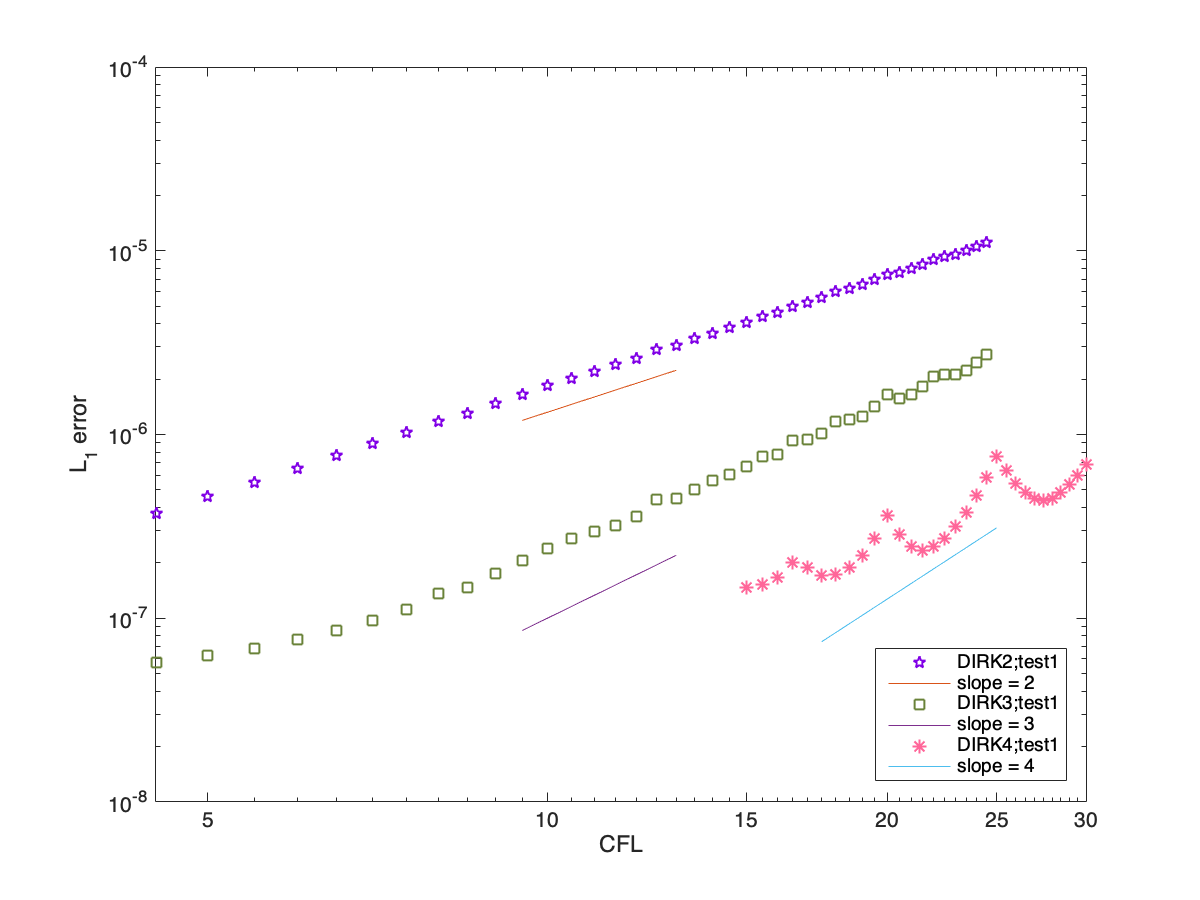}
\end{minipage}
\hfill
\begin{minipage}[b]{0.48\textwidth}
\centering
	\includegraphics[width = 1.0\textwidth]{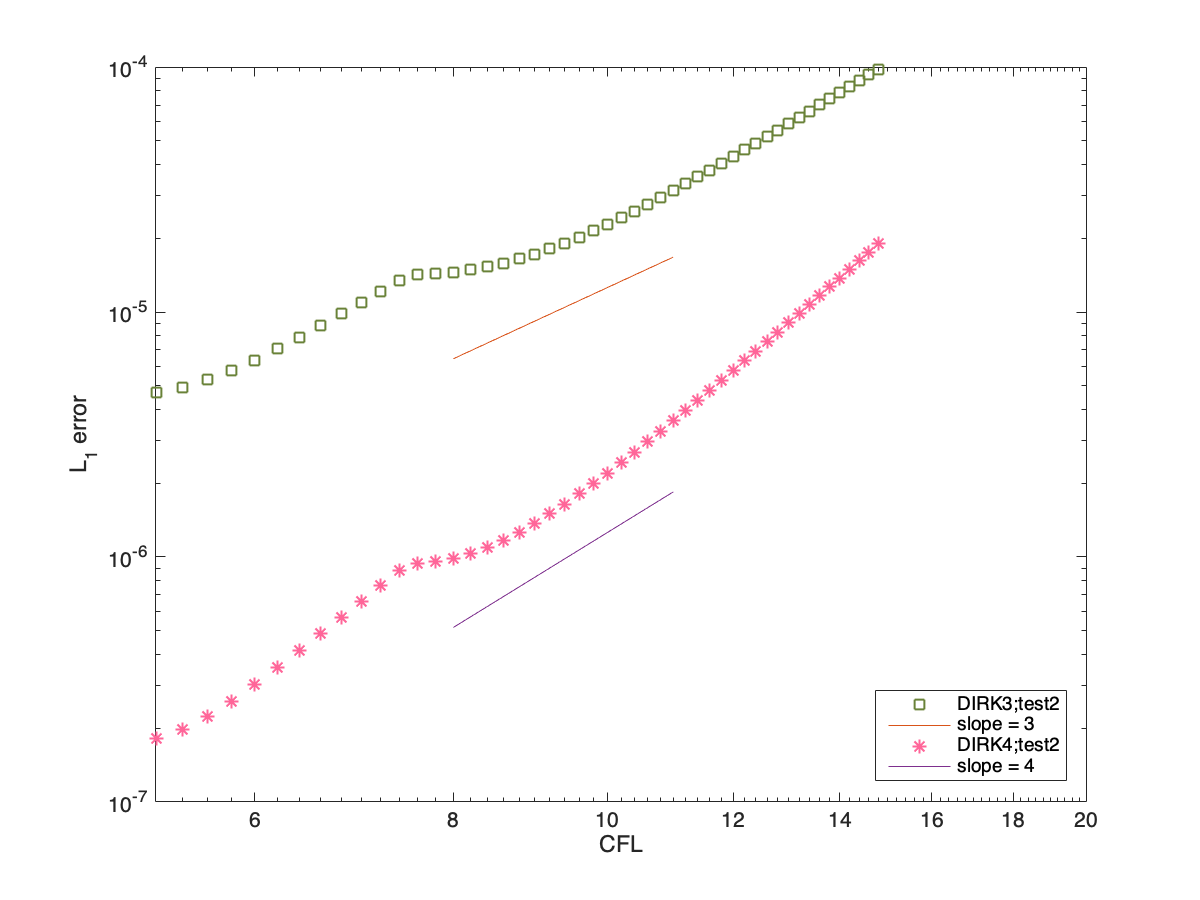}
\end{minipage}
\caption{$L^1$ error with varying CFL number of Example~\ref{exa_2d_rigid}. Left: test1 using exact solution on mesh $J = 200^2$. Right: test2 using reference solution when $CFL = 0.01$ on mesh $J = 150^2$.}
\label{fig:2D_rigid_temp}
\end{figure}

\end{exa}


\begin{figure}
\begin{minipage}[b]{0.48\textwidth}
\centering
    \includegraphics[width=\textwidth]{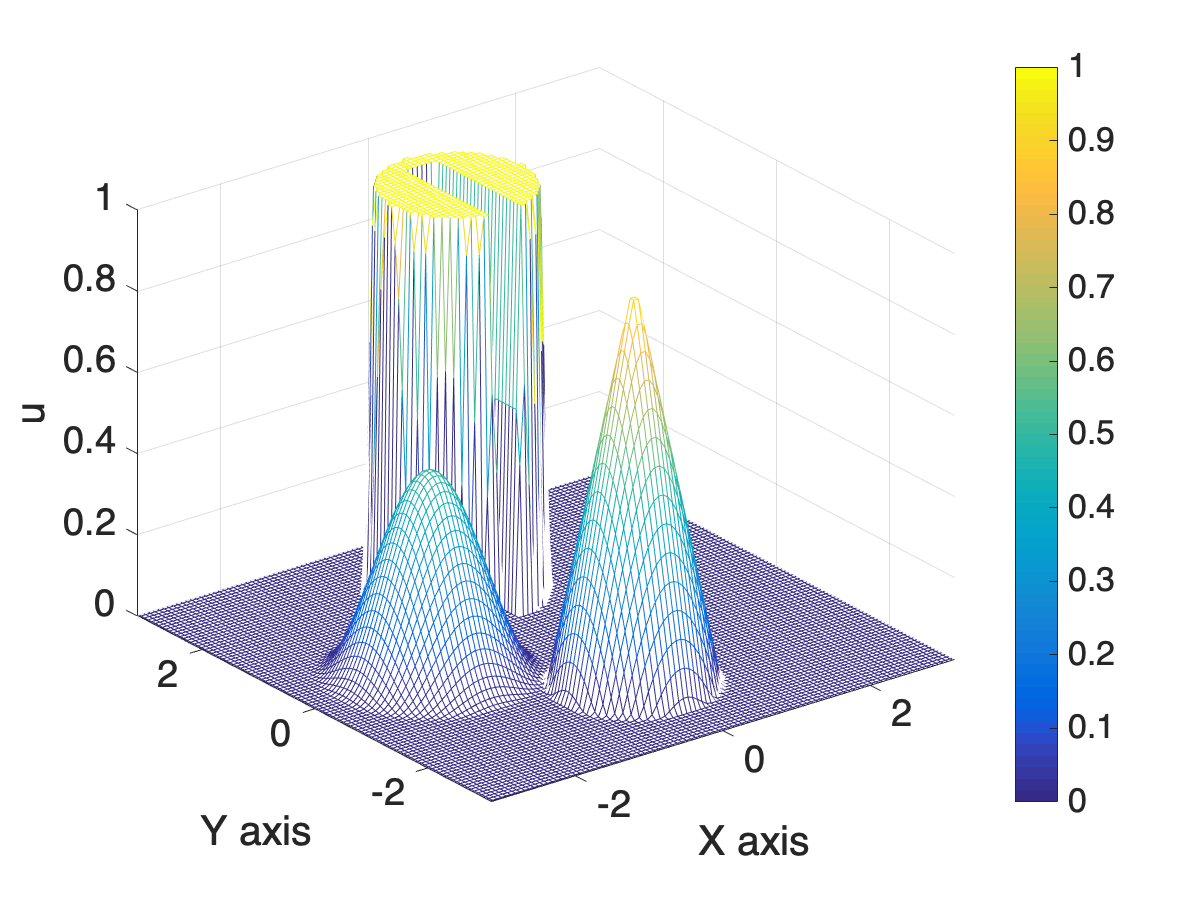}
\end{minipage}
\hfill
\begin{minipage}[b]{0.48\textwidth}
\centering
    \includegraphics[width=\textwidth]{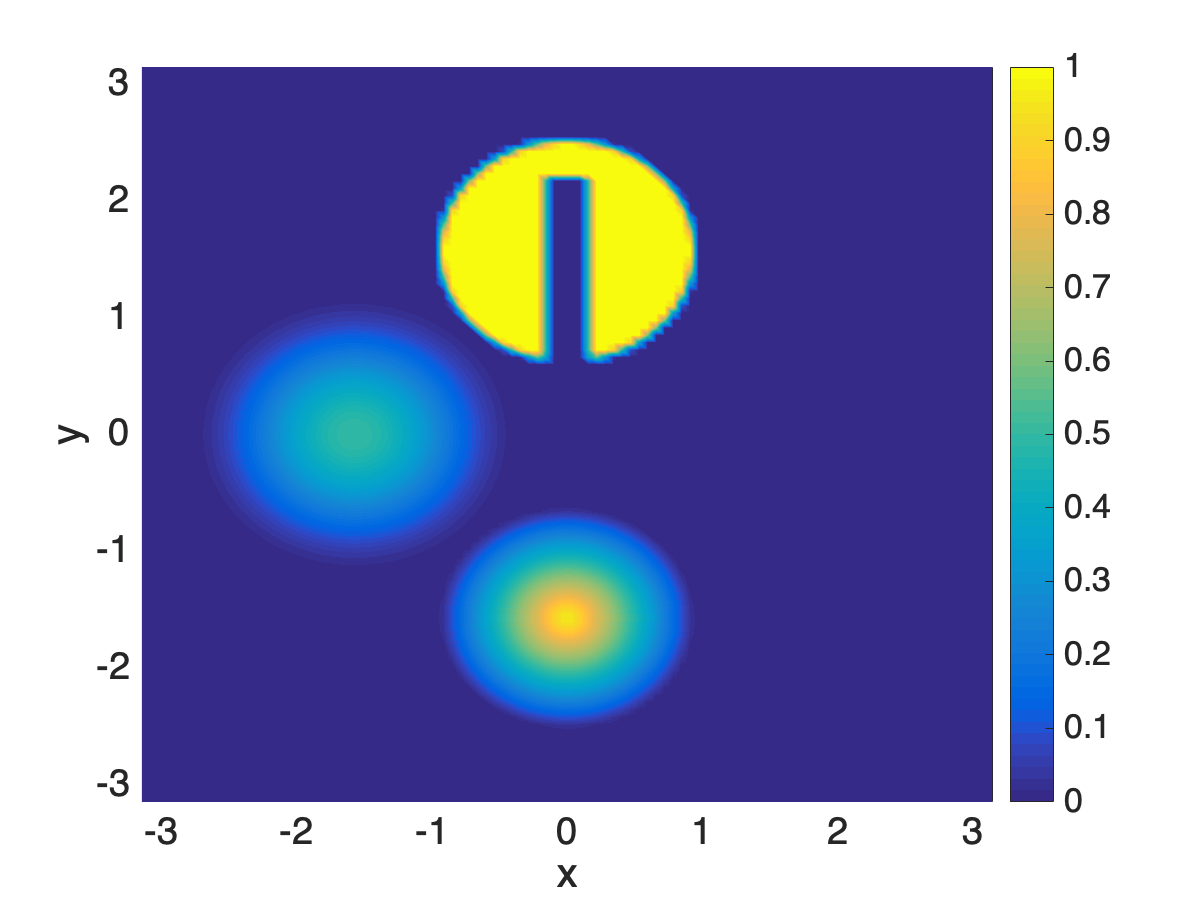}
\end{minipage}
\caption{Plots of the initial profile consisting of a slotted disk, a cone and a smooth hump for Example 3.4. Mesh size is $200 \times 200$. Left: Initial condition. Right: Contour plot.}
\label{fig:disk_cone_hump}
\end{figure}


\begin{figure}
\begin{minipage}[b]{0.45\textwidth}
\centering
    \includegraphics[width=\textwidth]{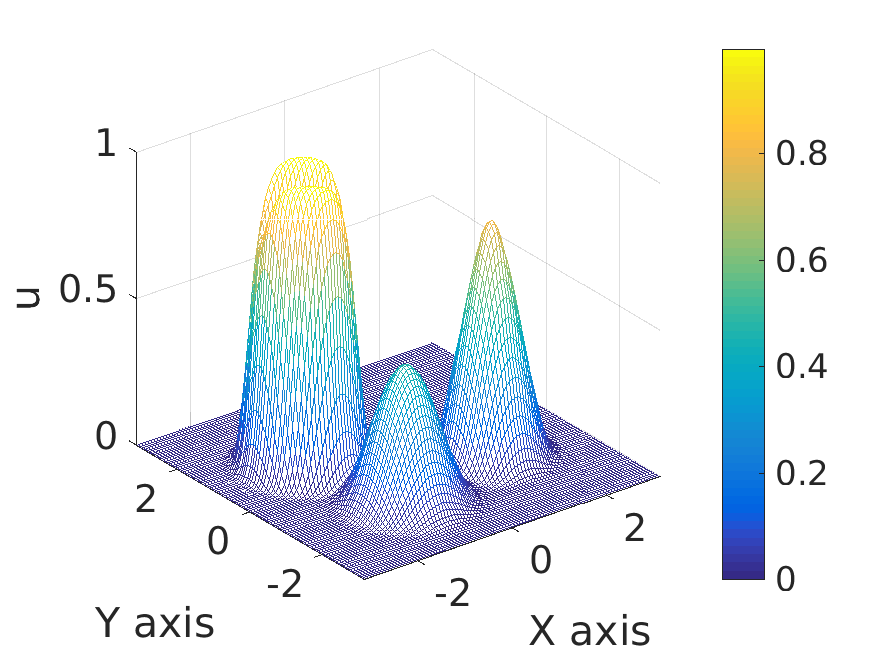}
\end{minipage}
\hfill
\begin{minipage}[b]{0.45\textwidth}
\centering
    \includegraphics[width=\textwidth]{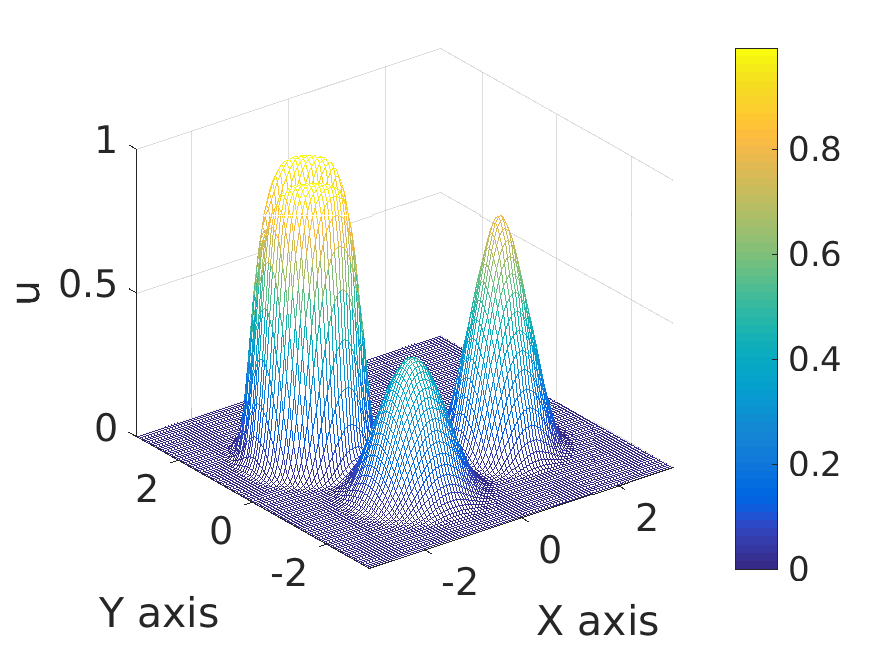}
\end{minipage}
\begin{minipage}[b]{0.45\textwidth}
\centering
    \includegraphics[width=\textwidth]{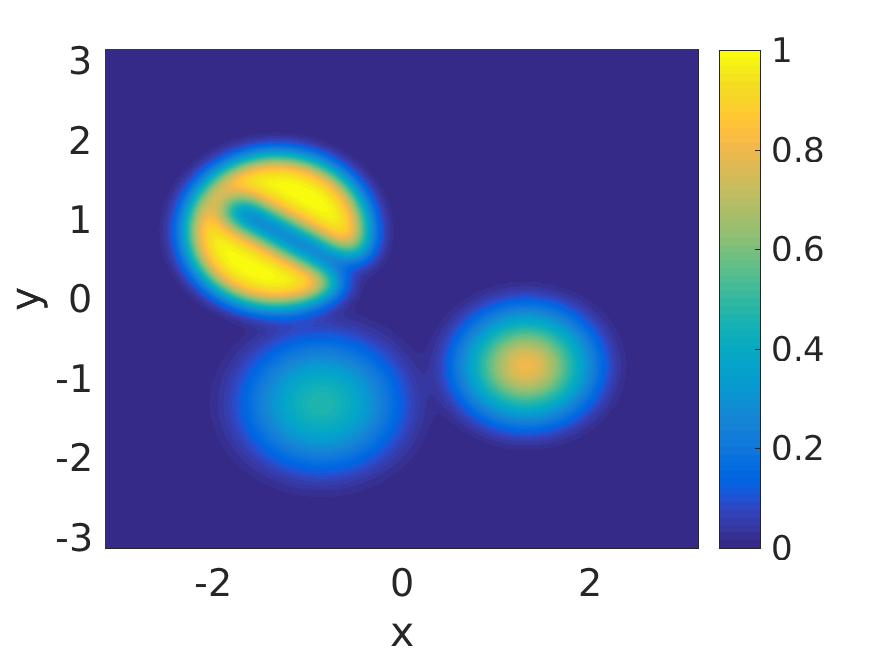}
\end{minipage}
\hfill
\begin{minipage}[b]{0.45\textwidth}
\centering
    \includegraphics[width=\textwidth]{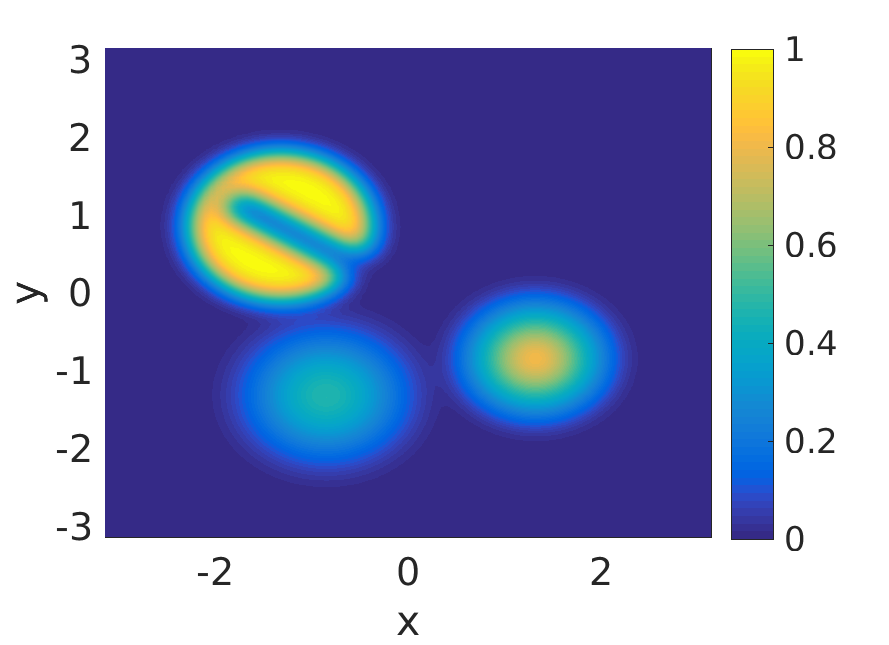}
\end{minipage}
\caption{\small{First row consists of plots for numerical solutions with SLDG-LDG for equation \eqref{eq_exa4.1} with initial data Figure \ref{fig:disk_cone_hump} and the second row consists of the corresponding contour plots. Mesh size is $200 \times 200$. Final integration time $T = 1.0$. $\Dt = 2.5\Dx$.
From left to right: $P^1$ SLDG-LDG, $P^2$ SLDG-LDG. }}
\label{fig:rigid_disk_cone_hump}
\end{figure}


\begin{figure}
\begin{minipage}[b]{0.45\textwidth}
\centering
    \includegraphics[width=\textwidth]{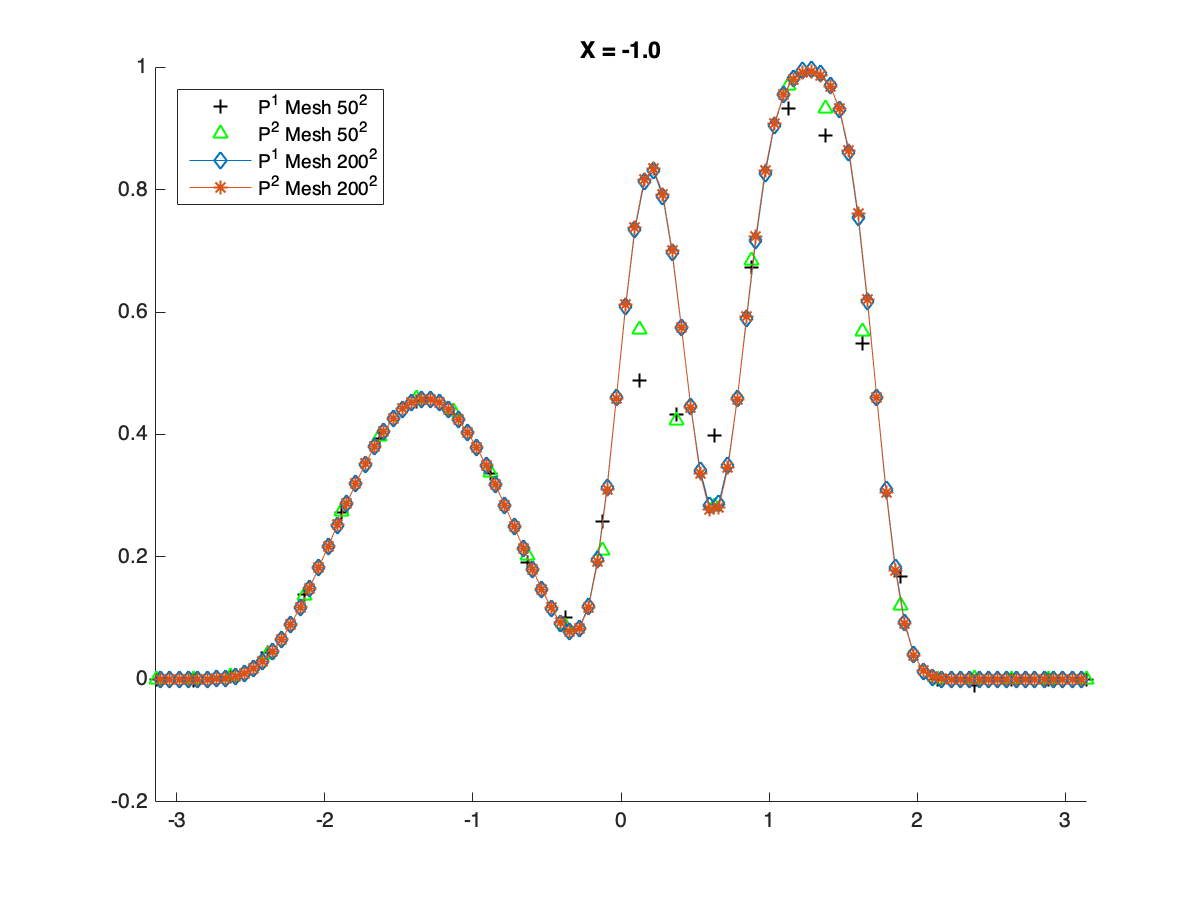}
\end{minipage}
\begin{minipage}[b]{0.45\textwidth}
\centering
    \includegraphics[width=\textwidth]{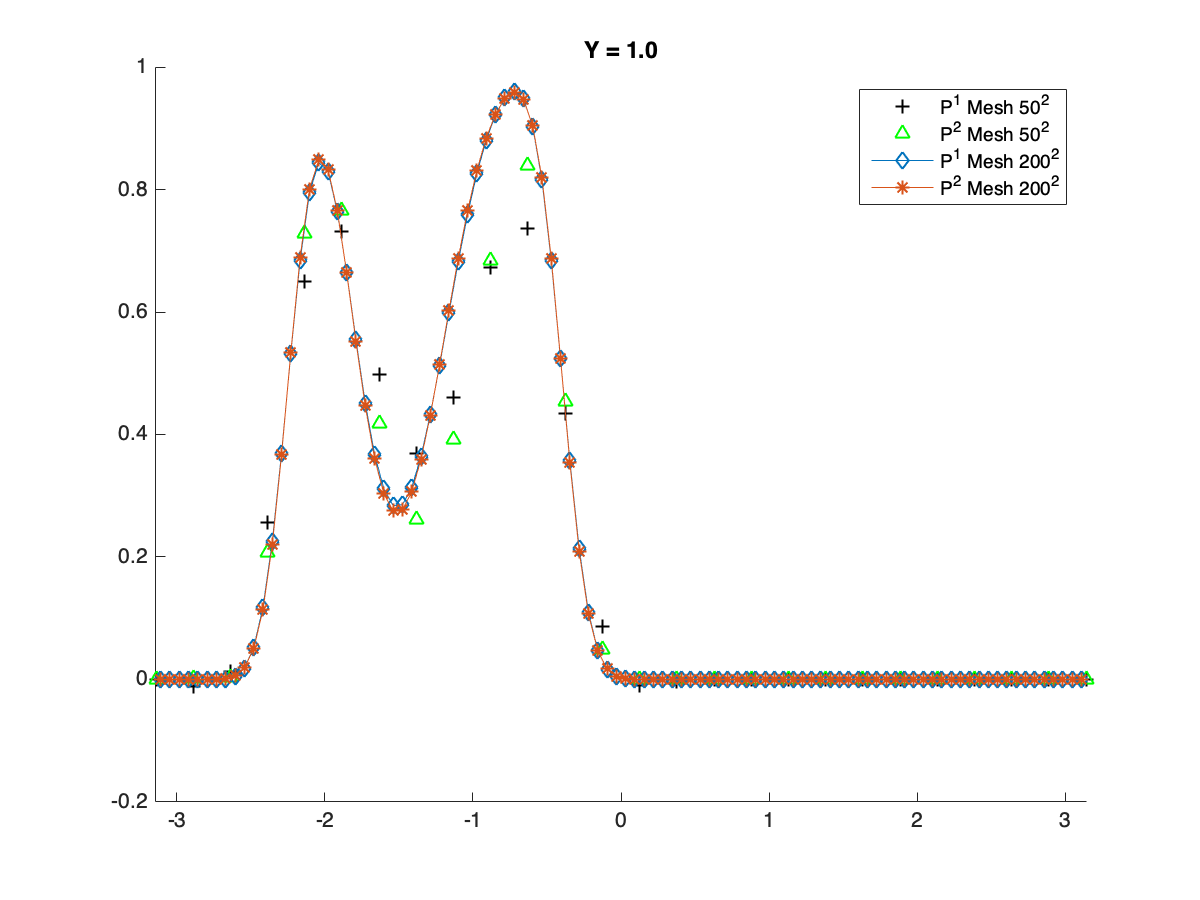}
\end{minipage}
\caption{\small{Plots of the 1D cuts of the numerical solutions for equation~\eqref{eq_exa4.1} at $X = -1.0$ (left) and $Y = 1.0$ (right) with initial data in Figure~\ref{fig:disk_cone_hump} with $CFL = 2.5$ at $T = 1.0$.}}
\label{fig:1D_cuts_rigid}
\end{figure}



\begin{exa} \label{exa_2d_swirling}

(Swirling deformation flow.) Consider
 \beq \label{eq_exa5}
u_t-( \cos( \frac{x}{2} )^2 \sin(y) f(t) u)_x+( \sin(x) \cos(\frac{y}{2})^2 f(t) u )_y		=	\epsilon \Delta u,	 \quad x, y \in[-\pi, \pi]
\eeq
with $f(t) = \cos( \frac{\pi t}{T} ) \pi$. 

To test the spatial order of accuracy, the initial condition is set to be 
\beq
u(x,y,0) =
\begin{cases}
r^b_0 \cos( \frac{r^b(\mathbf{x})\pi}{2r^b_0} )^6,		\quad & \text{if}\ r^b(\mathbf{x}) < r^b_0,\\
0,										\quad &\text{otherwise},
\end{cases}
\eeq
where $r^b_0 = 0.3\pi$ and $r^b( \mathbf{x} )=\sqrt{ (x-x^b_0)^2+(y-y^b_0)^2 }$ denotes the distance between $(x,y)$ and the center of the cosine bell $(x^b_0,y^b_0) = (0.3 \pi,0)$.
Since there is no  analytical solution available, we choose the numerical solution computed on a refined mesh with $J = 300^2$ cells as a reference solution. The results are  shown in Table \ref{spatial_2d_swir}. Note that comparable performance are observed when quadrilateral and quadratic curved (QC) approximations to the upstream cells are used.  Figure \ref{fig:2D_swirling_temp} shows the temporal convergence study for the proposed scheme coupled with different DIRK methods. A mesh of $J = 200^2$ cells is used. The solution computed with CFL = 0.01 is chosen as the reference solution. The final time is  $0.1$. Again, expected high order temporal convergence rates are observed for all three cases.

Additionally, we solve eq.~\eqref{eq_exa5} with the same initial condition as in Figure \ref{fig:disk_cone_hump}. We choose $\epsilon = 0.01, T = 1.5$ and numerically integrate the solution up to time $1.5$. Numerical solutions and the corresponding contour plots for the proposed schemes are plotted in Figure \ref{fig:disk_cone_hump2}.
The 1D cuts of the numerical solutions of eq.~\eqref{eq_exa5} at $X = 0$ and $Y = 1.54$ with mesh size $J = 50^2$ along with the one computed on a refined mesh with $J = 200^2$ are plotted in Figure \ref{fig:1D_cuts_swirling}. From the 1D cut plot at $X = 0$, both $P^2$ SLDG-LDG and $P^2$ SLDG-LDG-QC schemes perform better than $P^1$ scheme on mesh of size $50^2$.


\linespread{1.0}

\begin{table} [htbp]\scriptsize
\centering
\caption{Spatial order of accuracy for Example \ref{exa_2d_swirling} with CFL = 1.0, $\epsilon = 1.0$ at $T = 0.1$.}
\label{spatial_2d_swir}
\bigskip

\begin{tabular}{|c | cc |cc | cc |}
\hline
\multicolumn{7}{|c|}{$k = 0$; Quadrilateral}\\
\hline
 mesh &{$L^1$ error} & Order    &{$L^2$ error} & Order &{$L^{\infty}$ error} & Order\\
\hline
$20^2$	&     2.15E-03 &		 &     6.50E-03 & 			&     5.86E-02 &		 \\
$60^2$ 	&     7.56E-04 &     2.05 &     2.42E-03 &     1.94 &     2.15E-02 &     1.96 \\
$100^2$	&     3.60E-04 &     2.20 &     1.18E-03 &     2.13 &     1.13E-02 &     1.92 \\
$140^2$ &     2.83E-04 &     0.96 &     9.35E-04 &     0.93 &     1.20E-02 &    -0.23 \\
$180^2$ &     2.29E-04 &     1.05 &     7.58E-04 &     1.04 &     8.08E-03 &     1.95 \\
\hline
\multicolumn{7}{|c|}{$k = 1$; Quadrilateral}\\
\hline
 mesh &{$L^1$ error} & Order    &{$L^2$ error} & Order &{$L^{\infty}$ error}& Order\\
\hline
$20^2$ 	&     2.93E-04 & 		&     9.13E-04 & 			&     1.49E-02 &		 \\
$60^2$	&     3.17E-05 &     4.36 &     1.04E-04 &     4.25 &     2.14E-03 &     3.80 \\
$100^2$ &     1.10E-05 &     3.13 &     3.63E-05 &     3.14 &     7.66E-04 &     3.06 \\
$140^2$ &     5.90E-06 &     2.50 &     1.98E-05 &     2.42 &     4.53E-04 &     2.09 \\
$180^2$ &     3.83E-06 &     2.16 &     1.29E-05 &     2.14 &     3.04E-04 &     1.98 \\
\hline
\multicolumn{7}{|c|}{$k = 2$; Quadrilateral}\\
\hline
 mesh &{$L^1$ error} & Order    &{$L^2$ error} & Order &{$L^{\infty}$ error}& Order\\
\hline
$20^2$ 	&     3.42E-05 &		 &     1.10E-04 &			 &     2.73E-03 &		 \\
$60^2$ 	&     1.35E-06 &     6.33 &     4.74E-06 &     6.15 &     1.22E-04 &     6.09 \\
$100^2$ &     3.00E-07 &     4.47 &     1.06E-06 &     4.44 &     2.67E-05 &     4.51 \\
$140^2$	&     1.10E-07 &     3.98 &     3.96E-07 &     3.94 &     1.00E-05 &     3.90 \\
$180^2$ &     5.32E-08 &     3.63 &     1.91E-07 &     3.63 &     5.04E-06 &     3.43 \\
\hline
\multicolumn{7}{|c|}{$k = 2$; QC}\\
\hline
 mesh &{$L^1$ error} & Order    &{$L^2$ error} & Order &{$L^{\infty}$ error}& Order\\
\hline
   $20^2$ &     3.30E-05 &		 &     1.06E-04 & 		&     2.53E-03 & 		\\
    $60^2$ &     1.14E-06 &     6.58 &     3.75E-06 &     6.54 &     9.73E-05 &     6.38 \\
   $100^2$ &     2.48E-07 &     4.54 &     8.15E-07 &     4.54 &     2.09E-05 &     4.58 \\
   $140^2$ &     9.07E-08 &     4.01 &     3.00E-07 &     3.98 &     7.83E-06 &     3.90 \\
   $180^2$ &     4.32E-08 &     3.70 &     1.42E-07 &     3.71 &     3.86E-06 &     3.53 \\
 \hline
\end{tabular}

\end{table}


\begin{figure}
\centering
	\includegraphics[width = 0.5\textwidth]{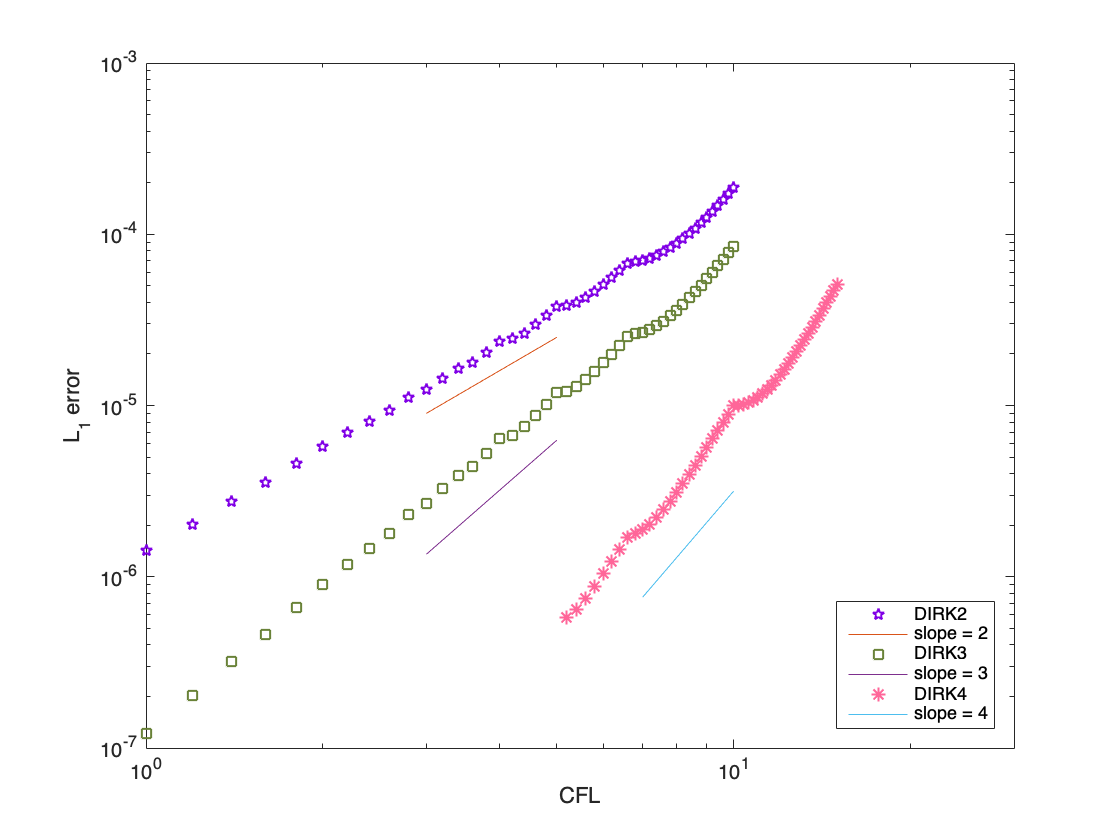}
\caption{$L_1$ error with varying CFL number of Example \ref{exa_2d_swirling}. From top to bottom: DIRK2, DIRK3 and DRIK4.}
\label{fig:2D_swirling_temp}
\end{figure}

\end{exa}


\begin{figure}
\begin{minipage}[b]{0.3\textwidth}
\centering
    \includegraphics[width=\textwidth]{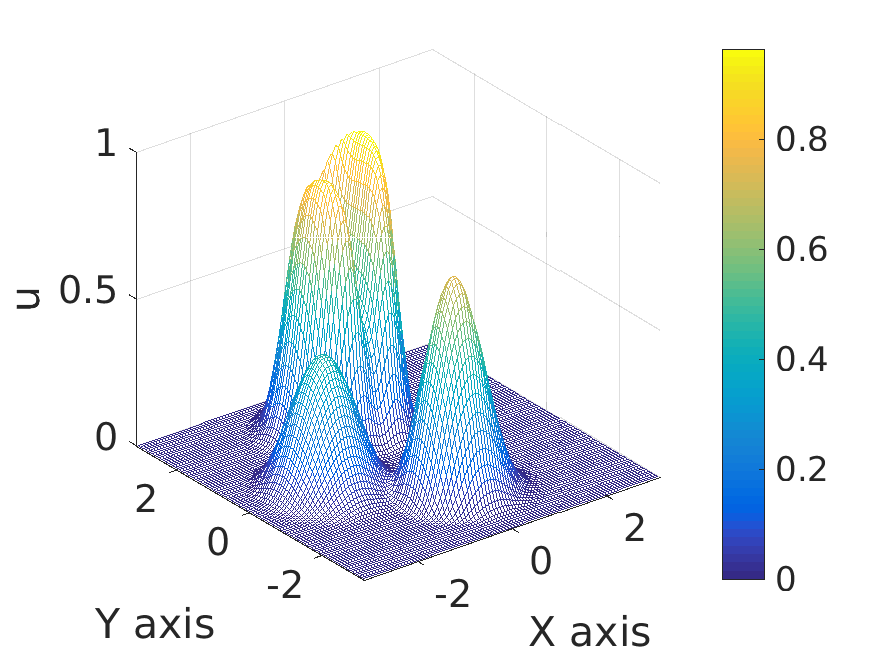}
\end{minipage}
\hfill
\begin{minipage}[b]{0.3\textwidth}
\centering
    \includegraphics[width=\textwidth]{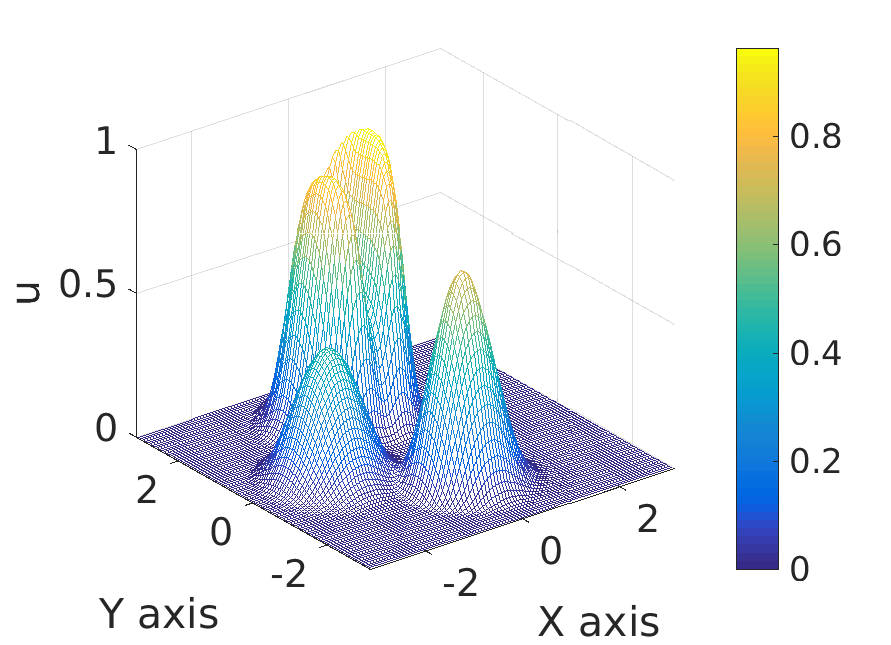}
\end{minipage}
\hfill
\begin{minipage}[b]{0.3\textwidth}
\centering
    \includegraphics[width=\textwidth]{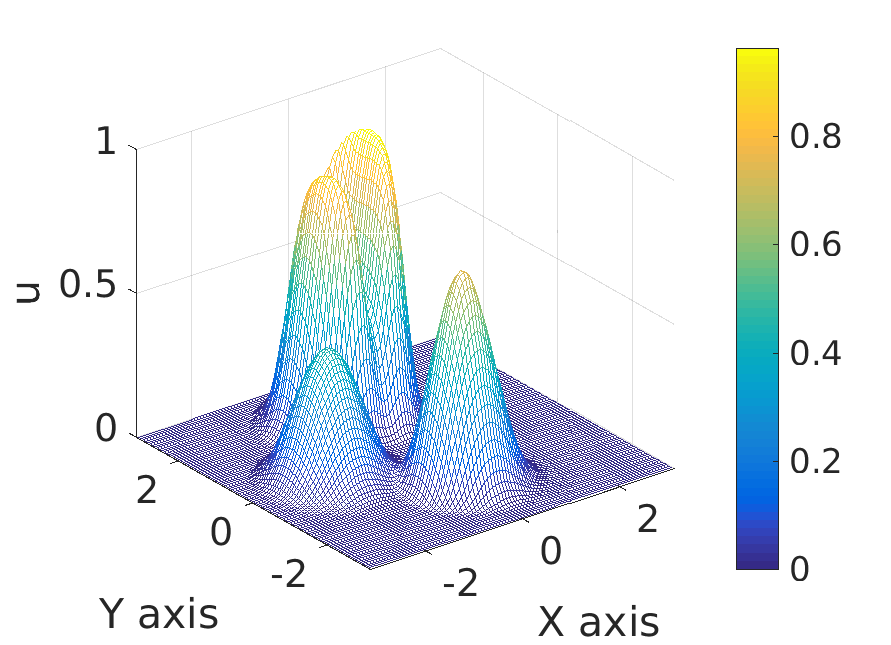}
\end{minipage}

\begin{minipage}[b]{0.3\textwidth}
\centering
    \includegraphics[width=\textwidth]{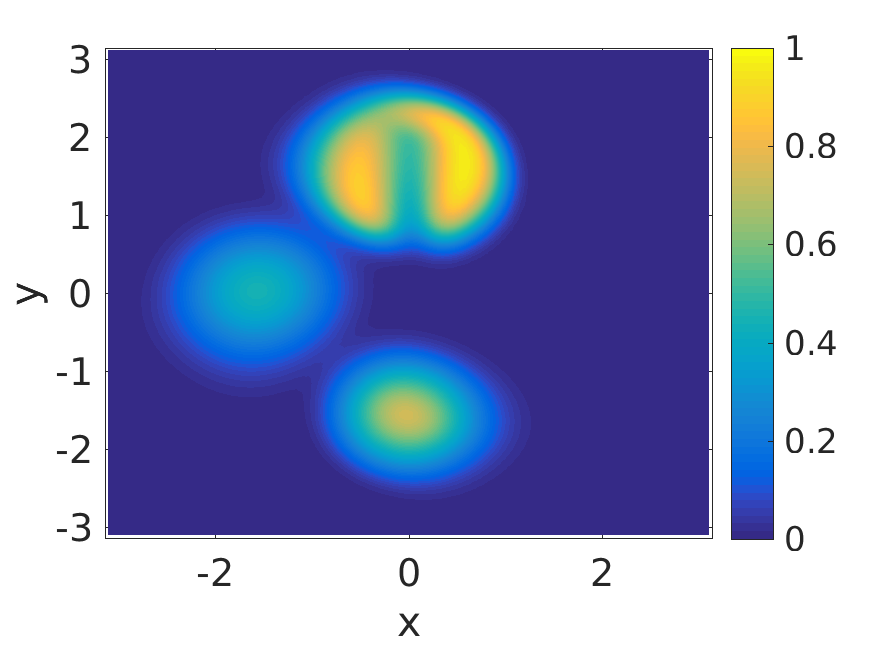}
\end{minipage}
\hfill
\begin{minipage}[b]{0.3\textwidth}
\centering
    \includegraphics[width=\textwidth]{2D_Swirling_plot/nk1_qc0_2Dcontour.png}
\end{minipage}
\hfill
\begin{minipage}[b]{0.3\textwidth}
\centering
    \includegraphics[width=\textwidth]{2D_Swirling_plot/nk1_qc0_2Dcontour.png}
\end{minipage}
\caption{\small{First row consist of mesh plots of SLDG-LDG solutions for equation \eqref{eq_exa5} with $f(t) = \cos( \frac{\pi t}{T} ) \pi $, initial data Figure \ref{fig:disk_cone_hump} and the second row consists of the corresponding contour plots. Mesh size is $100 \times 100$. Final integration time $T = 1.5$. $\Dt = 2.5\Dx$.
From left to right: $P^1$ SLDG-LDG, $P^2$ SLDG-LDG, $P^2$ SLDG-LDG$+$QC. }}
\label{fig:disk_cone_hump2}
\end{figure}


\begin{figure}
\begin{minipage}[b]{0.45\textwidth}
\centering
    \includegraphics[width=\textwidth]{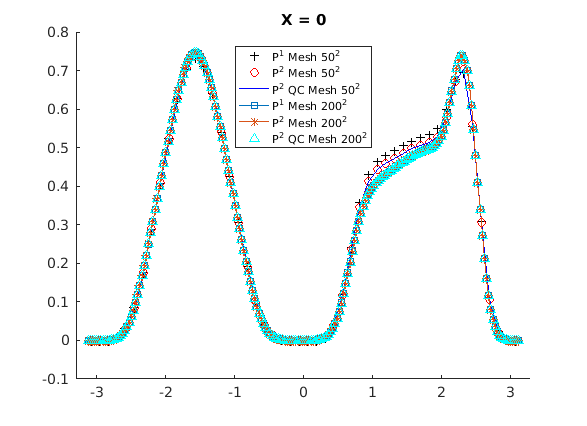}
\end{minipage}
\begin{minipage}[b]{0.45\textwidth}
\centering
    \includegraphics[width=\textwidth]{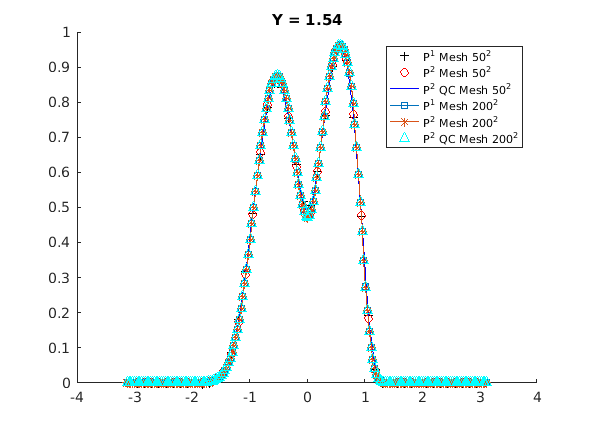}
\end{minipage}
\caption{\small{Plots of the 1D cuts of the numerical solutions for equation \eqref{eq_exa5} at $X = 0$ (left) and $Y = 1.54$ (right) with initial data Figure \ref{fig:disk_cone_hump} at with $CFL = 2.5$ at $T = 1.5$. }}
\label{fig:1D_cuts_swirling}
\end{figure}

\section{
Conclusions
}
\label{sec6}
\setcounter{equation}{0}

In this paper, we developed a semi-Lagrangian (SL) discontinuous Galerkin (DG) method for solving linear convection-diffusion equations. For the scheme formulation,  the DG solution is evolved  along the characteristics to treat the convection part by an efficient SLDG transport method; while the diffusion part is discretized by a local DG method in conjunction with diagonally implicit Runge-Kutta methods along characteristics. The method is high order accurate, mass conservative and is unconditionally stable. In the theoretical aspect, the unconditional $L^2$ stability was proved for the method coupled with the backward Euler discretization. 
In view of the application of the SLDG method coupled with RK exponential integrators to nonlinear Vlasov dynamics in \cite{cai2019high}, extensions of our algorithm to nonlinear convection-diffusion problems will be investigated in our future research work.

\section{Appendix}
\label{sec7}

\subsection{Butcher tableaus for time discretization methods}

In the following Table \ref{tab_dirk3} and Table \ref{tab_dirk4}, we present the third-order DIRK3 and fourth-order DIRK4 respectively. Both tableaus are stiffly accurate.

\linespread{1.5}


\begin{table}[htbp]
\begin{center}

\begin{tabular}{c|c c c}
	$\gamma$					& 	$\gamma$ 		 	&	  			&					\\
$\f{1+\gamma}{2}$				&	$\f{1-\gamma}{2}$ 		& $\gamma$		&	 	 			\\
		1					&	$\beta_1$ 			& $\beta_2$		&	$\gamma$		\\
\hline
 							& $\beta_1$				& $\beta_2$		& 	$\gamma$		\\
\end{tabular}

\end{center}
\caption{DIRK3. $\gamma \approx 0.435866521508459, \beta_1 = -\f{3}{2} \gamma^2+4\gamma-\f{1}{4}, \beta_2 = \f{3}{2} \gamma^2-5\gamma+\f{5}{4}$.}
\label{tab_dirk3}
\end{table}


\begin{table}[htbp]
\begin{center}
\begin{tabular}{c|c c c  c c}
	$\f{1}{4}$				&	$\f{1}{4}$ 			& 	 				& 					 & 					& 				\\
	$\f{3}{4}$				&	$\f{1}{2}$ 			&  	$\f{1}{4}$			& 					 & 			      		&				\\
	$\f{11}{20}$			&	$\f{17}{50}$		&	$-\f{1}{25}$ 		&	$\f{1}{4}$		 	 & 					& 				\\
	$\f{1}{2}$			 	&	$\f{371}{1360}$ 	&	$-\f{137}{2720}$	& 	$\f{15}{544}$ 		 &	$\f{1}{4}$			&				 \\
	1					&	$\f{25}{24}$		& 	$-\f{49}{48}$		&	$\f{125}{16}$		 &	$-\f{85}{12}$		& 	$\f{1}{4}$ 		\\
	\hline
						& 	$\f{25}{24}$		&   $-\f{49}{48}$		&	$\f{125}{16}$	 	 & 	$-\f{85}{12}$	         & 	$\f{1}{4}$		\\
\end{tabular}
.
\end{center}
\caption{DIRK4}
\label{tab_dirk4}
\end{table}


\subsection{Implementation procedures}
\label{Im_pro}

We now briefly discuss the components of the sparse matrix $B_1$ in \eqref{eq: lsystem}.
Let $\varphi_l\ (l = 0,\cdots,k)$ denote the local bases of $P^k(I_j)$, the numerical solution $u$ can be written as
\begin{equation*}
u = \sum\limits_{l=0}^k u_l \varphi_l \doteq {\bf u} \cdot \boldsymbol{\varphi},\quad x\in I_j.
\end{equation*}
where ${\bf u} = \left(u_0,\cdots,u_k\right)$ and $\boldsymbol{\varphi} = \left(\varphi_0,\cdots,\varphi_k\right)^T$. The basis functions are chosen as the scaled Legendre polynomials. For instance, for $k=2$, $P^2(I_j) = \{1,\xi,\xi^2-\f{1}{12}\}$ with $\xi = \f{x-x_j}{\Dx}$.

To update $u^{n+1}$,
\beq \label{eq:num_test}
(u^{n+1}, \varphi_m)_{I_j}
								=	\left(\sum\limits_{l=0}^k u^{n+1}_l \varphi_l, \varphi_m \right)_{I_j}
								=	\sum\limits_{l=0}^k u^{n+1}_l \left(\varphi_l, \varphi_m \right)_{I_j}.
\eeq
Then, as $\varphi_m$ in \eqref{eq:num_test} going through bases in $P^k(I_j)$,
\begin{equation*}
(u^{n+1}, \boldsymbol{\varphi})_{I_j}
										=	M ({\bf u}^{n+1})^T_{I_j}
\end{equation*}
where $M$ is the mass matrix of size $(k+1)^2$ with $M_{m,l} = \left(\varphi_l, \varphi_m\right)_{I_j} (m,l = 0,1,\cdots,k)$.

Following the procedures in Step 1.2a, the LDG spatial approximation to $\Delta {\bf u}^{n+1}$ can be represented by a matrix vector form as $D_{\Delta} {\bf u}^{n+1}$, where $D_{\Delta}$ is the matrix approximating diffusion operator via the LDG formulation. 

\bit

\item

For \eqref{eq2:12b}, suppose $\hat{u} = u^{-}$, then
$$
\hat{u}^{n+1}_{\jR} \boldsymbol{w}^{-}_{\jR}		=	C ({\bf u}^{n+1})^T_{I_j},	\quad
\hat{u}^{n+1}_{\jL} \boldsymbol{w}^{+}_{\jL}		=	D ({\bf u}^{n+1})^T_{I_{j-1}},	\quad
(u^{n+1},\boldsymbol{w}_x)_{I_j}	=	N ({\bf u}^{n+1})^T_{I_j}
$$

with $(m = 0,1,\cdots,k)$
$$
C_{m,l} =	\varphi_l(\xR) \varphi_m(\xR),	\quad
D_{m,l}	=	\varphi_l(\xL) \varphi_m(\xL),	\quad
N_{m,l} =	(\varphi_l,(\varphi_m)_x)_{I_j}
$$
as $w$ going through $\{\varphi_l\}^k_{l = 0}$.

\item

For \eqref{eq2:12a}, choose $\hat{q} = q^{+}$, then
$$
\hat{q}^{n+1}_{\jR} \boldsymbol{v}^{-}_{\jR}		=	E ({\bf q}^{n+1})^T_{I_{j+1}},	\quad
\hat{q}^{n+1}_{\jL} \boldsymbol{v}^{+}_{\jL}		=	F ({\bf q}^{n+1})^T_{I_j},	\quad
(q^{n+1},\boldsymbol{v}_x)_{I_j}	=	N ({\bf q}^{n+1})^T_{I_j}
$$

with $(m = 0,1,\cdots,k)$
$$
E_{m,l} =	\varphi_l(\xR) \varphi_m(\xL),	\quad
F_{m,l} =	\varphi_l(\xL) \varphi_m(\xL),	
$$
as $v$ going through $\{\varphi_l\}^k_{l = 0}$.

\eit

Then, $(u^{n+1}_{xx},\boldsymbol{\varphi})_{I_j} (\forall j)$ can be represented with a linear combination of the above matrices $C,D,E,F,N$ and ${\bf u}^{n+1}$.

To sum up, $B_1$ in \eqref{eq: lsystem} can be obtained with pre-calculated elements $M,C,D,E,F$ and $N$.



\end{document}